\RequirePackage{fix-cm}
\documentclass[smallextended]{svjour3}
\smartqed

\usepackage{amsmath,amssymb,mathtools}
\usepackage{booktabs}
\usepackage{graphicx}
\usepackage{url}
\usepackage[hidelinks]{hyperref}

\journalname{Geometriae Dedicata}

\newcommand{\R}{\mathbb{R}}
\newcommand{\Dnat}{\mathcal D_N}
\newcommand{\area}{\operatorname{area}}
\newcommand{\diameter}{\operatorname{diam}}
\newcommand{\conv}{\operatorname{conv}}
\newcommand{\dd}{\,\mathrm{d}}
\newcommand{\eps}{\varepsilon}
\newcommand{\cross}{\mathbin{\times}}

\begin{document}

\title{Maximum-Area Small Polygons of Even Order}

\titlerunning{Maximum-Area Small Polygons of Even Order}

\author{Dawid Trela}
\authorrunning{D. Trela}

\institute{D. Trela \at
  Faculty of Law and Administration, War Studies University, Warsaw, Poland\\
  \email{dawidmtrela@gmail.com}\\
  ORCID: \href{https://orcid.org/0000-0001-9781-6425}{0000-0001-9781-6425}}

\date{}

\maketitle

\begin{abstract}
A small $n$-gon is a planar $n$-gon whose diameter is at most one.  For odd $n$, Reinhardt proved that the regular polygon is optimal.  For even $n$, the maximizer is nonregular, and only a few low orders were known exactly.  We prove that for every even $n\geq8$ the maximum-area small $n$-gon is unique up to Euclidean isometry and reflection.  Foster and Szab\'o's description of the diameter graph reduces a maximizer to an $(n-1)$-cycle of unit distances together with one pendant diameter.  We determine the compatible boundary order, interpret the cycle as the centers of a Reuleaux $(n-1)$-gon, and eliminate the pendant vertex by a one-variable area calculation.  The remaining first-order equations have conserved translation and rotation quantities.  In the resulting radial variables they become the critical-point equations of an explicit function on a convex domain.  We prove that this function is strictly concave by factoring the relevant principal minors of its local Hessian.  Compactness gives existence, while strict concavity gives uniqueness.  The proof is analytic.  The symbolic scripts supplied with the paper check algebraic identities but are not used as part of the proof.  Combined with the classical odd-order and low-order results, this determines the maximal area for every $n\geq3$.
\keywords{small polygon \and isodiametric problem \and discrete geometry \and diameter graph \and concavity}
\subclass{52A40 \and 52B60 \and 90C26}
\end{abstract}

\section{Introduction}\label{sec:intro}

A planar polygon is called \emph{small} if the distance between every two of its vertices is at most one.  For a fixed number $n$ of vertices, let
\[
 A_n:=\sup\{\area(P): P\text{ is a simple small }n\text{-gon}\}.
\]
This is the polygonal form of the isodiametric area problem.  When $n$ is odd, Reinhardt proved that the regular small $n$-gon is optimal \cite{Reinhardt1922}, with
\begin{equation}\label{eq:oddvalue}
 A_n=\frac{n\sin(2\pi/n)}{8\cos^2(\pi/(2n))}\qquad(n\text{ odd}).
\end{equation}
The even case behaves differently.  The regular hexagon is already suboptimal.  Graham determined the optimum for $n=6$ \cite{Graham1975}, and Audet, Hansen, Messine and Xiong settled $n=8$ \cite{Audet2002}.  Subsequent work developed strong constructions, global-optimization formulations, and bounds for further even orders \cite{HenrionMessine2013,Mossinghoff2006,Bingane2023OL,Bingane2023DCG}.  Those results also show why the remaining problem is not merely numerical: good candidates are readily produced, but a proof must exclude all other diameter graphs and all nonsymmetric perturbations.

The main result is the following.

\begin{theorem}[Even orders]\label{thm:even}
For every even integer $n\geq8$, there is a unique maximum-area small $n$-gon up to Euclidean motions and reflection.
\end{theorem}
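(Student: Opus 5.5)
The plan follows the route announced in the abstract: existence by compactness, reduction of the structure of any maximizer via the diameter graph, and uniqueness by a strict-concavity argument.

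\textbf{Existence.} The space of small $n$-gons modulo translation is compact once we allow degenerate configurations. Area is continuous on this space. Since $A_n$ strictly exceeds $A_{n-1}$ (insert a vertex on a boundary arc), a limiting configuration has $n$ distinct vertices and is a genuine maximizer. In particular, maximizers exist for every even $n\ge 8$.

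\textbf{Structure of a maximizer.} Next I invoke Foster and Szab\'o's theorem on the diameter graph of a maximum-area small polygon. For even $n$ it says the graph consists of an $(n-1)$-cycle of unit-length diagonals together with one pendant unit edge. The $(n-1)$-cycle is odd, so it is a star cycle whose vertices are the centers of a Reuleaux $(n-1)$-gon. From this I determine the boundary order. The cycle vertices appear in the order dictated by the star polygon, and the pendant vertex $v$ sits on the arc of the Reuleaux polygon opposite its unit neighbour $w$.

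\textbf{Reduction to finitely many variables.} Parametrize the Reuleaux structure by the angles $\theta_1,\dots,\theta_{n-1}$ subtended at the cycle vertices, with $\sum\theta_i=\pi$. With every other datum fixed, the area is a one-variable function of the position of $v$ on its arc, namely a sum of triangle areas $\tfrac12\sin(\cdot)$. Maximizing this explicitly puts $v$ at the arc point determined by its two boundary neighbours, where the triangle on them is isosceles. After substitution, area becomes an explicit function $F(\theta)$, a sum of terms $\tfrac12\sin\theta_i$-type pieces plus correction terms coming from the pendant triangle. Its first-order conditions are equations of Lagrange type. Closure of the polygon and invariance under translation and rotation give conserved quantities. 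These quantities let us pass to radial variables in which the critical-point equations are exactly $\nabla G=0$ for an explicit function $G$ on a convex domain $D$, where $D$ is cut out by $0<\theta_i$, the small-diameter constraints and the simplex condition.

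\textbf{Uniqueness and the main obstacle.} Since every maximizer is a critical point of $G$ in the interior of $D$, strict concavity of $G$ on $D$ gives at most one critical point, and therefore uniqueness up to isometry and reflection. The reflection accounts for the choice of orientation of the star cycle and of the side on which the pendant vertex lies. The hard step is proving strict concavity. The Hessian of $G$ is local, meaning tridiagonal or banded, because each term couples only neighbouring variables. My plan is to show negative definiteness by computing the leading principal minors recursively, as a continuant. The goal is to factor each minor into manifestly signed trigonometric factors valid on $D$, using bounds such as $\theta_i<\pi/2$. The pendant-vertex terms break the pattern at one position, so that position must be handled separately by a Schur-complement step. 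I expect this factorization of the minors to be the main obstacle. I would use symbolic algebra only to discover the identities, and then verify each one by hand.
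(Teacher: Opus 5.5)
\textbf{There is a genuine gap.} Your outline follows the paper's route only at the level of its abstract. The step that carries the proof is announced but not supplied, and your concrete guesses about it would not work as stated.

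\textbf{The function and its domain.} You never say what the radial variables or $G$ are. In the paper they are $X=(C,u_0,\dots,u_{N-1})$, with $u_i=|v_i|^2$ measured from the momentum center. Translation invariance makes the bulk momentum vanish there. Rotation invariance gives $u_i+C=\tfrac12\cos\gamma_i$ for $i\ne0$, and $u_0+C=\tfrac12\cos\theta$ at the marked vertex. The function is $\Phi_N=\sum_iE(u_i,u_{i+1})+2V_C(u_0)+\sum_{i\ne0}V_C(u_i)-2\pi C$. Its $u_i$-derivatives are $2(\gamma_i-\beta_{i-1}-\alpha_i)$, and its $C$-derivative is $2(\sum_i\gamma_i-\pi)$. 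This conflicts with your description of $D$ in two ways:
\begin{itemize}
\item The turning sum is one of the critical-point equations, not a domain constraint. In radial variables it is a nonlinear level set, which would destroy convexity.
\item The non-neighbour distance constraints are not local in the radial variables and have no evident convex description, so they must be dropped. $\Dnat$ keeps only $0<2(u_i+C)<1$ and the local triangle inequalities, whose convexity follows from concavity of $\sqrt u+\sqrt v$ and $\sqrt{uv}$. The actual maximizer satisfies the dropped constraints anyway.
\end{itemize}

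\textbf{The missing branch lemma.} Your bridge ``every maximizer is a critical point of $G$ in the interior of $D$'' needs a lemma you omit. The momentum center must lie strictly on the inner side of every cycle edge, i.e.\ $h_i=d_i\cross(-v_i)>0$. Otherwise the splitting $\gamma_i=\beta_{i-1}+\alpha_i$ fails and the maximizer is not a critical point. The paper proves this from the explicit KKT multiplier formulas, treating the four edges adjacent to the pendant separately.

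Relatedly, after the pendant is eliminated the skeleton area is not a local sum of $\tfrac12\sin\theta_i$ terms. In the turning angles it is a nonlocal double sum, defined only on the closure locus. This is why the paper works in vertex coordinates with multipliers for $|d_i|=1$.

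\textbf{The concavity plan.} Your plan aims at the wrong structure, because the Hessian is not banded. $C$ enters every $V_C(u_i)$, and the $u$-chain is cyclic. So the leading principal minors are bordered periodic determinants with no three-term continuant recursion, and you have no identities that would sign them for all $N$.

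The missing idea is a decomposition. Give half of each $V_C(u_i)$ to each of its two edges, so that $\Phi_N=\sum_iL(C,u_i,u_{i+1})+V_C(u_0)-2\pi C$ with $L=E(u,v)+\tfrac12V_C(u)+\tfrac12V_C(v)$. It then suffices that the $3\times3$ matrix $-D^2L$ is positive definite. Sylvester's criterion reduces this to the factorizations $2Q=(t-ab)^2+\cos^2p$ and $R=ab(1-d^2)+t^2(1+d-ab)+\tfrac t2(t-1-d)^2$. Every nonzero variation activates some triple $(\delta C,\delta u_i,\delta u_{i+1})$, so strict concavity of $\Phi_N$ follows uniformly in $N$. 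The pendant needs no Schur-complement step, since its only extra term $V_C(u_0)$ is already concave.

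\textbf{Reconstruction.} Uniqueness of the critical point yields uniqueness of the polygon only through a reconstruction lemma, which your outline leaves implicit. The radial data return the $\gamma_i$ via the Noether relations, and these fix the cycle up to motion and reflection.
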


The proof starts from the diameter graph of a maximizing polygon.  A theorem of Foster and Szab\'o shows that, for even $n$, this graph consists of an odd cycle on $n-1$ vertices and one pendant diameter.  Lemma~\ref{lem:order} determines the only boundary order compatible with these diameters.  Figure~\ref{fig:structure} shows the resulting configuration for $n=8$; the solid curve is the polygonal boundary and the dashed segments are the diameter edges.

\begin{figure}[t]
\centering
\includegraphics[width=.62\textwidth]{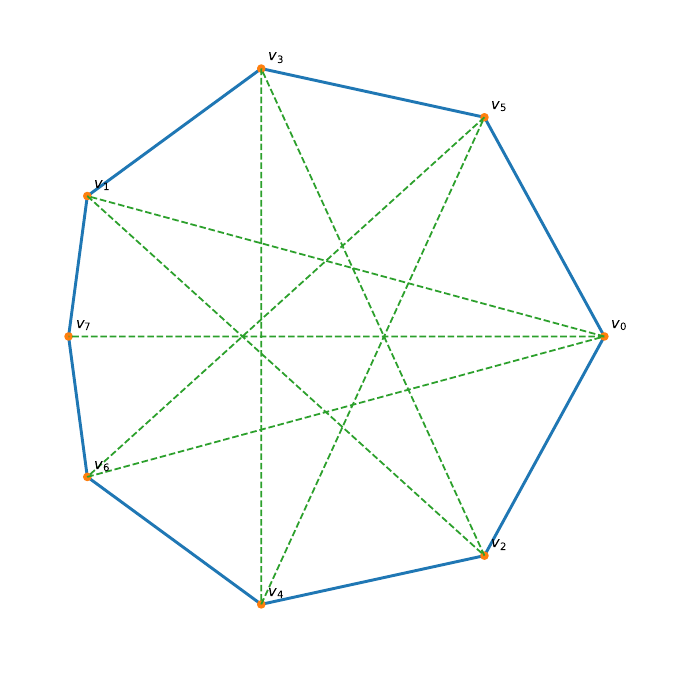}
\caption{The maximizing octagon reconstructed from the critical equations.  The dashed segments form the $7$-cycle of diameters together with the pendant diameter.  The labels refer to the diameter-cycle notation used in Section~\ref{sec:structure}.}
\label{fig:structure}
\end{figure}

After the combinatorial reduction, the argument is geometric.  The odd diameter cycle is the set of centers of a Reuleaux polygon.  The pendant vertex lies on one distinguished circular arc, and its optimal position can be eliminated explicitly.  The area problem is then an equality-constrained problem on the unit-distance cycle.  Translation and rotation invariance produce two conserved quantities for its first-order equations.  Once the configuration is recentered using those quantities, the stationary equations can be written as the gradient equations of a scalar function $\Phi_N$ on a convex domain of radial variables.  We then prove that $\Phi_N$ is strictly concave.  This rules out two distinct maximizing configurations.  Existence is supplied independently by compactness, so the concavity argument is used only for uniqueness.

The auxiliary function $\Phi_N$ is not the polygonal area, and its domain is larger than the set of globally realizable small polygons.  This distinction matters: the critical point used below is obtained from an actual maximizer, and strict concavity is then used only to exclude a second maximizing configuration.

\paragraph{Related work at $n=14$.}
The author has a separate manuscript that proves the case $n=14$ by a certified computer-assisted global argument.  That project predates the uniform proof given here and addresses the single case $n=14$ with a different method.  The overlap is confined to the preliminary compactness and diameter-graph reduction, including the crossing and boundary-order lemmas.  The Reuleaux reduction, the conserved quantities, the variational function $\Phi_N$, the strict-concavity proof, and the uniform argument for all even $n\geq8$ are specific to the present paper.  No numerical certificate from the $n=14$ manuscript is used below.

Section~\ref{sec:structure} establishes the diameter-graph and boundary-order reduction.  Section~\ref{sec:reuleaux} treats the Reuleaux representation and removes the pendant vertex.  Section~\ref{sec:noether} derives the first-order invariants.  Sections~\ref{sec:action} and \ref{sec:concavity} introduce the variational formulation and prove strict concavity.  Section~\ref{sec:global} reconstructs the polygon and proves Theorem~\ref{thm:even}.  Numerical values and drawings in Section~\ref{sec:numerics} are illustrations only.
\section{Structural reduction for even $n$}\label{sec:structure}

For a finite ordered tuple $X=(x_0,\ldots,x_{n-1})$, let $[X]=\conv\{x_0,\ldots,x_{n-1}\}$.  We first pass from arbitrary simple polygons to convex maximizers.

\begin{lemma}[Convex-hull reduction]\label{lem:hull}
For every simple $n$-gon $P$ with vertex set $\{p_0,\ldots,p_{n-1}\}$,
\[
 \area(P)\leq \area([P]),\qquad
 \diameter([P])=\max_{i,j}|p_i-p_j|.
\]
\end{lemma}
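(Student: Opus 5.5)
The plan is to prove the two assertions separately, since both are elementary consequences of convexity.

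For the area inequality, I view $P$ as the compact region bounded by the simple closed polygonal curve through $p_0,\ldots,p_{n-1}$. Every edge $[p_i,p_{i+1}]$ lies in the convex set $[P]$, so the boundary curve $\partial P$ lies in $[P]$. The bounded component of $\R^2\setminus\partial P$ (which exists by the Jordan curve theorem for polygons) is then also contained in $[P]$. To see this, take a point $x\notin[P]$. Since $[P]$ is compact and convex, a ray from $x$ pointing away from $[P]$ (for instance along the outward normal of a separating line) never meets $[P]$, hence never meets $\partial P$. So $x$ lies in the unbounded component, i.e.\ outside $P$. This gives $P\subseteq[P]$, and monotonicity of Lebesgue measure yields $\area(P)\le\area([P])$.

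For the diameter identity, the inequality $\diameter([P])\ge\max_{i,j}|p_i-p_j|$ is immediate because every $p_i$ lies in $[P]$. For the reverse inequality, take $x,y\in[P]$ and write $x=\sum_i\lambda_ip_i$ and $y=\sum_j\mu_jp_j$ as convex combinations. Then
\[
|x-y|=\Bigl|\sum_{i,j}\lambda_i\mu_j(p_i-p_j)\Bigr|\le\sum_{i,j}\lambda_i\mu_j|p_i-p_j|\le\max_{i,j}|p_i-p_j|,
\]
using $\sum_{i,j}\lambda_i\mu_j=1$. Equivalently, the function $|x-y|$ is convex in each variable, so its maximum over the polytope $[P]\times[P]$ is attained at a pair of vertices. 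Taking the supremum over $x,y$ gives the claimed equality.

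The only step needing care is the containment $P\subseteq[P]$, that is, the claim that the interior region of a simple polygon lies inside the convex hull of its vertices. I would settle it with the separating-ray argument above and not invoke anything heavier. The rest is routine.
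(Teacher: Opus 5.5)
Your proposal is correct and matches the paper's proof. The paper likewise observes that the bounded Jordan region lies in the convex hull, and that the convex function $x\mapsto|x-y|$ attains its maximum over the polytope at a vertex, applied in each variable in turn, which is exactly your ``equivalently'' remark; you simply add detail through the separating-ray argument for $P\subseteq[P]$ and the explicit double convex-combination estimate.
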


\begin{proof}
The bounded Jordan region of $P$ lies in its convex hull.  For fixed $y$, the convex function $x\mapsto |x-y|$ attains its maximum over a compact polytope at a vertex.  Applying the same observation to $y$ proves the diameter identity.
\end{proof}

Normalize one entry to the origin and define the compact-hull value
\[
 \widehat A_n:=\max\bigl\{\area([X]):x_0=0,\ |x_i-x_j|\leq1\ \text{for all }i,j\bigr\}.
\]
The displayed feasible set is compact and hull area is continuous, so the maximum exists.

\begin{lemma}[Outward bump]\label{lem:bump}
Let $X\subset\R^2$ be finite, of diameter at most one, and suppose $K=\conv X$ has positive area.  For every side $[a,b]$ of $K$ there is $q\notin K$ such that $\diameter(X\cup\{q\})\leq1$ and $\area(\conv(X\cup\{q\}))>\area(K)$.  The point can be chosen so that the only boundary change replaces $[a,b]$ by $[a,q]\cup[q,b]$.
\end{lemma}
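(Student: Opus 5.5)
Take $q$ on the outward side of the side $[a,b]$, at a point where the lune of admissible positions is nonempty. Set $m=(a+b)/2$ and let $\nu$ be the outward unit normal to $[a,b]$, so that $K\subset\{x:\langle x-m,\nu\rangle\le 0\}$. Put $q_t=m+t\nu$ for small $t>0$. Then $q_t\notin K$, and since $[a,b]$ is a side of $K$, the hull $\conv(K\cup\{q_t\})$ equals $K\cup\conv\{a,q_t,b\}$ as long as $q_t$ sees only the side $[a,b]$ from outside. Its area is therefore $\area(K)+\tfrac12 t|b-a|>\area(K)$.

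To make "sees only $[a,b]$" precise, let $a'$ and $b'$ be the neighbouring vertices of $K$ adjacent to $a$ and $b$. The lines through the sides $[a',a]$ and $[b,b']$ meet the outer half-plane of $[a,b]$ in a region containing a neighbourhood of $m$ intersected with that half-plane, because the interior angles of $K$ at $a$ and $b$ are less than $\pi$. For all sufficiently small $t$, the point $q_t$ lies strictly on the inner side of both of these lines. Hence $a$ and $b$ remain vertices, and the boundary change is exactly $[a,b]\to[a,q_t]\cup[q_t,b]$. If $K$ is a triangle, the same argument applies with $a'=b'$.

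The main obstacle is the diameter condition $|q_t-x|\le1$ for every $x\in X$. Here I would split the points of $X$ into two classes.

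For points $x$ with $|x-m|<1$, continuity gives $|q_t-x|\le 1$ for small $t$.

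For points $x$ with $|x-m|=1$ there is more work. Since $|x-a|\le1$ and $|x-b|\le1$, strict convexity of the Euclidean norm forces $|x-m|<1$ whenever $a\ne b$ and $x$ lies off the line $ab$. Indeed, $|x-m|^2=\tfrac12|x-a|^2+\tfrac12|x-b|^2-\tfrac14|a-b|^2\le 1-\tfrac14|a-b|^2<1$. This holds for every $x$, collinear or not, because $a\ne b$. So in fact $\max_{x\in X}|x-m|\le\sqrt{1-|a-b|^2/4}<1$, and every point is in the first class.

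Consequently $t$ can be chosen with $0<t\le 1-\sqrt{1-|a-b|^2/4}$, and then $|q_t-x|\le|q_t-m|+|m-x|\le1$ for all $x$ by the triangle inequality. Combined with the small-$t$ requirement of the previous paragraph, a valid $t>0$ exists. Distances among the original points are unchanged, so $\diameter(X\cup\{q_t\})\le1$. This also handles degenerate positions of $a'$ and $b'$ without extra casework.
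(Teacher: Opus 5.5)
Your proposal is correct and follows the same proof as the paper. Both use the point $m+t\nu$ on the outward normal through the midpoint of $[a,b]$, the midpoint identity $|x-m|^2\le 1-|a-b|^2/4<1$ for every $x\in X$, and the area gain $t|a-b|/2$. The differences are minor. You give an explicit admissible $t\le 1-\sqrt{1-|a-b|^2/4}$ via the triangle inequality, where the paper invokes a common positive margin. You also check only the two adjacent side lines, which suffices because the sides of a convex polygon visible from an exterior point form a contiguous chain. The paper instead notes that the strict supporting-half-plane inequalities for all other sides persist by continuity, which avoids needing that fact.
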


\begin{proof}
Put $c=(a+b)/2$ and $L=|a-b|>0$.  For $v\in X$, the midpoint identity gives
\[
 |c-v|^2=\frac{|a-v|^2+|b-v|^2}{2}-\frac{L^2}{4}
 \leq 1-\frac{L^2}{4}<1.
\]
Because $X$ is finite, these inequalities have a common positive margin.  If $u$ is the exterior unit normal to $[a,b]$, then $q=c+\eps u$ has distance at most one from all old points for sufficiently small $\eps>0$.  The strict supporting-half-plane inequalities for the other sides persist.  The area increases by $L\eps/2$.
\end{proof}

\begin{proposition}[Attainment, strict monotonicity, and extremality]\label{prop:attain}
For every $n\geq3$,
\[
 \widehat A_{n+1}>\widehat A_n,\qquad A_n=\widehat A_n.
\]
Every maximizing $n$-tuple consists of $n$ distinct extreme points, has diameter one, and, in convex-boundary order, is a strictly convex maximum-area small $n$-gon.
\end{proposition}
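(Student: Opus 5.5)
The argument has three steps. First we show that every maximizing tuple consists of distinct extreme points. Strict monotonicity then follows from Lemma~\ref{lem:bump}. Finally we identify $A_n$ with $\widehat A_n$ using Lemma~\ref{lem:hull}, and diameter one comes from a scaling argument.

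\emph{Step 1: extremality of maximizers.} Let $X=(x_0,\ldots,x_{n-1})$ attain $\widehat A_n$. Suppose that fewer than $n$ distinct points of $X$ are extreme points of $K=[X]$. This happens if two entries coincide or if some entry lies in $K$ without being a vertex of $K$. First, $K$ has positive area, since $\widehat A_n\ge$ the area of a small triangle $>0$ when $n\ge3$. Then $K$ has at least three sides. Choose an entry $x_k$ that is redundant, meaning that removing it does not change $K$. Replace it by the point $q$ given by Lemma~\ref{lem:bump} for some side $[a,b]$ of $K$. The new tuple still has diameter at most one, and its hull area is strictly larger. After translating so that the first entry is $0$ (area and distances are invariant), it is feasible. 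This contradicts maximality. So every maximizer consists of $n$ distinct extreme points of $[X]$. Listed in convex-boundary order, they form a convex $n$-gon $P$ with $\area(P)=\area([X])$. It is strictly convex, since every vertex is extreme.

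\emph{Step 2: strict monotonicity.} Take a maximizer $X$ for $\widehat A_n$. Apply Lemma~\ref{lem:bump} to any side and append $q$ as an $(n+1)$-st entry. The result is feasible for $\widehat A_{n+1}$ and has larger hull area, so $\widehat A_{n+1}>\widehat A_n$.

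\emph{Step 3: $A_n=\widehat A_n$ and diameter one.} Let $P$ be any simple small $n$-gon. By Lemma~\ref{lem:hull}, $\area(P)\le\area([P])$ and the vertex tuple has diameter at most one. After translation this tuple is feasible, so $A_n\le\widehat A_n$. Conversely, the strictly convex polygon produced in Step~1 is a simple small $n$-gon with area $\widehat A_n$. Hence $A_n\ge\widehat A_n$, the supremum is attained, and every maximizer arises this way.

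For the diameter, suppose a maximizer had diameter $d<1$. Scaling by $1/d>1$ keeps the tuple feasible and multiplies the area by $d^{-2}>1$, which is a contradiction. Note that $d>0$ because the area is positive.

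The only delicate point is Step~1. We need the replacement to be made \emph{within} $n$ entries, so a redundant entry must be chosen that can be moved without shrinking $K$. Coincident entries and non-extreme entries both qualify. The bump lemma needs only $\conv X$ to have positive area, and this holds because $K$ is unchanged when the redundant entry is removed.
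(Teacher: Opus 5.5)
Your proof is correct and has the same structure as the paper's: the bump lemma gives $\widehat A_{n+1}>\widehat A_n$, Lemma~\ref{lem:hull} gives $A_n=\widehat A_n$, and dilation gives diameter one. The only difference is the extremality step. The paper bounds the hull of the $h<n$ extreme points by $\widehat A_h<\widehat A_n$ using the monotonicity it has already proved. You instead replace a redundant entry by the bump point in place; this is equally valid, since that entry lies in the hull of the others, so the new hull is $\conv(X\cup\{q\})$, and it avoids chaining the monotonicity inequalities.
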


\begin{proof}
Apply Lemma~\ref{lem:bump} to a maximizing tuple and append the bump point; this proves strict monotonicity.  If a maximizing $n$-tuple had only $h<n$ extreme points, then its positive-area hull would be feasible at order $h$ and would have area at most $\widehat A_h<\widehat A_n$, a contradiction.  Thus every entry is a distinct extreme point.  Listing them in boundary order gives a feasible strictly convex polygon, while Lemma~\ref{lem:hull} bounds every simple competitor by its hull.  Hence $A_n=\widehat A_n$.  Finally, a maximizing hull of diameter $d<1$ could be dilated by $1/d$, increasing area.
\end{proof}

\begin{theorem}[Foster--Szab\'o \cite{FosterSzabo2007}]\label{thm:FS}
If a unit-diameter $2m$-gon, $m\geq3$, has maximum area, then its complete diameter graph is a cycle on $2m-1$ vertices together with one pendant edge incident with the remaining vertex.
\end{theorem}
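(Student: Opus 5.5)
The plan is to derive the structure of the diameter graph $G$ of a maximizer $P$ from the first-order (KKT) conditions of the area problem, combined with the classical planar facts about diameter graphs. By Proposition~\ref{prop:attain}, $P$ is strictly convex with diameter one. The problem of maximizing $\area$ subject to $|p_i-p_j|\leq1$ is a smooth program. At vertex $p_i$, with neighbours $p_{i-1},p_{i+1}$, the gradient of area is $\tfrac12 J(p_{i+1}-p_{i-1})$, where $J$ is rotation by $-\pi/2$; this vector is nonzero and points outward. I first check a constraint qualification, for instance by noting that the active gradients at each vertex point into an open half-plane. Stationarity then gives
\[
 \tfrac12 J(p_{i+1}-p_{i-1})=\sum_{j:\,|p_i-p_j|=1}\lambda_{ij}(p_i-p_j),\qquad \lambda_{ij}=\lambda_{ji}\ge0 .
\]
The immediate consequences are that every vertex has degree at least one in $G$, and that a degree-one vertex has its diameter orthogonal to $p_{i+1}-p_{i-1}$.

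Next I use the classical planar facts (Hopf--Pannwitz). Any two diameters of a planar set either share an endpoint or cross, so $G$ has at most $n$ edges. Moreover, a connected component with as many edges as vertices contains exactly one cycle, and that cycle is odd. Combining this with the ``crossing'' picture on a convex polygon, I would show that $G$ is connected. Two components would yield two disjoint diameters that must cross, and a degree count then forces connectivity. So $G$ is a tree or a unicyclic graph with an odd cycle. Because $n=2m$ is even, the cycle cannot be Hamiltonian, so at least one vertex lies off the cycle.

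The core step is to show that exactly one vertex lies off the cycle and that it is a pendant attached directly to the cycle. I would argue by local perturbation. Suppose two vertices are off the cycle, or a pendant path has length at least two, or $G$ is a tree. Then some off-cycle vertex $p_k$ is a leaf whose neighbour $p_l$ has only that one diameter besides its own, or lies on a tree part with slack. In that case I rotate the leaf diameter rigidly about $p_l$, or move $p_l$ along its unit circle, by a small angle. This keeps all other constraints and changes area to first order by a nonzero amount unless an orthogonality condition holds at two vertices simultaneously. I then show the two orthogonality conditions are incompatible with strict convexity and the crossing structure. Similarly, a tree (no odd cycle) leaves a one-parameter flexible mechanism whose area derivative is nonzero, which contradicts maximality. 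The conclusion is that $G$ is an odd cycle on $2m-1$ vertices plus one pendant edge at the remaining vertex.

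The main obstacle is this perturbation step. One has to make sure the chosen motion genuinely preserves all unit-distance constraints, including the crossing diameters at the cycle vertex, and that its first-order area change cannot vanish. Where the first-order change does vanish, I would fall back on a second-order computation: the area, as a function of the rotation angle of a leaf on its arc, is a strictly concave trigonometric function. This pins down a unique position, and a second free vertex would then be forced into a position that violates convexity.
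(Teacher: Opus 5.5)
\textbf{There is a genuine gap.} The paper does not prove this statement: it quotes it from Foster and Szab\'o and uses it as a black box. Your sketch therefore has to stand on its own, and as written it does not.

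\textbf{The connectivity step fails.} Hopf--Pannwitz explicitly allows diameters in different components to cross. Crossing creates no common vertex, and no degree count changes that. Take the regular $2m$-gon of diameter one, $p_j=\frac12(\cos\frac{j\pi}{m},\sin\frac{j\pi}{m})$. It is strictly convex, every vertex has degree one, and its diameter graph is a perfect matching of $m$ pairwise crossing diameters. Worse for your strategy, it satisfies your stationarity system exactly. The area gradient at $p_i$ is $\sin(\pi/m)\,p_i=\frac12\sin(\pi/m)(p_i-p_{i+m})$, so every multiplier is positive and the orthogonality condition holds at both ends of every diameter. This one example refutes three claims in your plan:
\begin{itemize}
\item that connectivity follows from crossing;
\item that a forest ``leaves a flexible mechanism whose area derivative is nonzero'';
\item that two orthogonality conditions are incompatible with strict convexity and the crossing structure.
\end{itemize}
Such graphs can only be excluded with second-order or global input. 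For instance, translating one diameter of this polygon leaves the area exactly constant, because area is affine in that translation and the linear term vanishes. An increase appears only when this is coupled with a rotation of a neighbouring diameter.

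\textbf{The core step has the same defect.} Leaf orthogonality is a necessary condition that every maximizer satisfies. So perturbing a leaf alone gives nothing, and your second-order fallback only confirms that the leaf sits at the top of its arc. Sliding $p_l$ along a unit circle is not an admissible motion when $p_l$ is a cycle vertex. Such a vertex is pinned by two unit-distance constraints, so any admissible motion deforms the whole Reuleaux cycle and brings in all the cycle multipliers.

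A concrete case shows where the contradiction actually lies. Take a width-one Reuleaux triangle $ABC$ together with its three arc midpoints. This is a strictly convex small hexagon whose diameter graph is a $3$-cycle with three pendants, exactly one of your excluded configurations. Every leaf satisfies your orthogonality condition, and by symmetry so does its anchor. The hexagon fails to be stationary only because the vertex equations force multiplier $\frac12$ on each pendant edge and $(\sqrt3-2)/(2\sqrt3)<0$ on each triangle edge.

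To exclude short cycles, pendant paths and trees, you must show that the full KKT system with nonnegative multipliers is inconsistent for each such graph, or else give a global comparison. Your sketch asserts this (``I then show\ldots'') rather than proving it, and that assertion is essentially the entire content of the Foster--Szab\'o theorem. Your constraint qualification, by contrast, is easily repaired: the dilation $\delta p_i=-p_i$ strictly decreases every active constraint.
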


Let $n=2m$, set
\[
 N=2m-1=2k+1,
\]
and label the diameter cycle $v_0v_1\cdots v_{N-1}v_0$ and the pendant edge $v_0v_N$.  Put $d_i=v_{i+1}-v_i$, with cycle indices modulo $N$; thus $|d_i|=1$.

\begin{lemma}[Crossing diameters]\label{lem:crossing}
Two disjoint diameter segments of a finite planar set cross properly.
\end{lemma}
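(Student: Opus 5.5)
We argue by contradiction from the triangle inequality, with a short case split on how the four endpoints sit relative to their convex hull. Let $S$ be the finite set, $D=\diameter(S)$, and let $[a,b]$ and $[c,d]$ be diameter segments, so $|a-b|=|c-d|=D>0$. "Disjoint" is read as having no common endpoint, so $a,b,c,d$ are four distinct points, all pairwise within distance $D$. We must show that the open segments meet in exactly one point.

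\emph{Step 1 (a point in the hull of the other three).} We use one basic fact repeatedly. If $x\in\conv\{y,z,w\}$ and $x\notin\{y,z,w\}$, then for every point $p$ of the plane,
\[
|p-x|<\max\bigl(|p-y|,|p-z|,|p-w|\bigr)
\]
whenever $p$ is one of $y,z,w$. This follows from the strict convexity of the Euclidean norm along segments, as in Lemma~\ref{lem:hull}.

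We apply this to the endpoints.
\begin{itemize}
\item Suppose $d\in\conv\{a,b,c\}$. Taking $p=c$ gives $D=|c-d|<\max(|c-a|,|c-b|,0)\le D$, a contradiction.
\item The same argument with the roles of $c,d$ exchanged rules out $c\in\conv\{a,b,d\}$.
\item Suppose instead $a\in\conv\{b,c,d\}$. Taking $p=b$ gives $|a-b|<D$, again a contradiction; the case of $b$ is symmetric.
\end{itemize}
This also covers every collinear configuration. If the four points lie on a line, some endpoint lies between two others. Overlapping collinear segments are excluded in the same way. Disjoint collinear segments would span a length greater than $D$.

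\emph{Step 2 (convex position).} By Step 1 the four points are in strictly convex position, forming a convex quadrilateral. Each of $[a,b]$ and $[c,d]$ is then either a side or a diagonal.
\begin{itemize}
\item If both are diagonals, they cross properly, as the diagonals of a strictly convex quadrilateral meet at a single interior point. This is the desired conclusion.
\item If one is a diagonal and the other a side, they share an endpoint, which is impossible.
\item The remaining case is that both are sides. They share no endpoint, so they are opposite sides, and in cyclic order the quadrilateral is $a,b,c,d$ or $a,b,d,c$.
\end{itemize}

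\emph{Step 3 (both opposite sides).} Say the cyclic order is $a,b,c,d$, so the diagonals are $[a,c]$ and $[b,d]$. They meet at an interior point $o$. The points $a,o,d$ are not collinear, since otherwise the quadrilateral would degenerate. Strict triangle inequalities at $o$ therefore give
\[
|a-c|+|b-d|=|a-o|+|o-c|+|b-o|+|o-d| > |a-d|+|b-c|.
\]
This comparison alone is not the one we need, because it bounds the diagonals by the other pair of sides. We want to compare the diagonals with the two diameter sides $[a,b]$ and $[c,d]$. Pairing the terms the other way gives
\[
|a-o|+|o-b|>|a-b|,\qquad |c-o|+|o-d|>|c-d|,
\]
and summing,
\[
|a-c|+|b-d|>|a-b|+|c-d|=2D.
\]
Hence one of the diagonals $[a,c]$ or $[b,d]$ has length greater than $D$. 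This contradicts the definition of $D$ and completes the proof.

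The only real care needed is in Step 1. It must exhaust all non-convex and degenerate (collinear) positions with strict inequalities, so that Step 3 can use non-collinearity at $o$ to get strict triangle inequalities.
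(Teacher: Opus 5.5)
Your proof is correct and follows the paper's route. Both arguments first place the four endpoints in strictly convex position, then rule out the opposite-sides case with the strict triangle inequalities at the intersection point $o$ of the diagonals, which give $|a-c|+|b-d|>2D$. The one difference is in Step 1: the paper gets convex position from the strict exposure inequality $(p-q)\cdot(r-p)\le-\frac12|r-p|^2<0$ at each diameter endpoint, while you use strict convexity of the squared distance over a triangle. One small correction: in Step 3 the strictness you need is that $a,o,b$ and $c,o,d$ are noncollinear, not $a,o,d$; this follows from Step 1 in exactly the same way.
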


\begin{proof}
If $p,q$ are endpoints of a diameter and $r$ is another point, expansion of $|r-q|^2\leq|p-q|^2$ gives
\[
 (p-q)\cdot(r-p)\leq-\frac12|r-p|^2<0.
\]
Thus each endpoint is strictly exposed.  Four distinct endpoints of two diameters form a strict convex quadrilateral.  If the diameter segments were opposite sides rather than diagonals, the strict triangle inequalities at the intersection of the two diagonals would imply that the sum of those two diameter sides is strictly smaller than the sum of the diagonals, which is at most the same value.  This is impossible.
\end{proof}

\begin{lemma}[Canonical boundary order]\label{lem:order}
Up to reversing both the cycle labels and the boundary orientation, the convex-boundary order is
\begin{equation}\label{eq:boundaryorder}
 v_0,v_{N-2},v_{N-4},\ldots,v_1,v_N,v_{N-1},v_{N-3},\ldots,v_2.
\end{equation}
The pendant occupies the unique gap between $v_1$ and $v_{N-1}$.
\end{lemma}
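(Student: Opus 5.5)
Lemma~\ref{lem:crossing} is the only geometric input.  The plan is to show that it forces every diameter edge to "jump across" the polygon, and then to read off the boundary order from the diameter graph.

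By Proposition~\ref{prop:attain} the $n=N+1$ points are distinct extreme points in convex position, so we place them on a circle in cyclic boundary order.  For disjoint pairs of positions, "crossing properly" is equivalent to their chords interleaving.  Every two diameter edges in Theorem~\ref{thm:FS} that share no endpoint must interleave.

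\emph{First step: the cycle.}  Consider the $N$ cycle edges $v_iv_{i+1}$.  Since they come from a set of diameter at most one, each of them is a diameter.  We would like to show that the cyclic order of $v_0,\dots,v_{N-1}$ alone, with the pendant ignored, is $v_0,v_{N-2},v_{N-4},\dots,v_1,v_{N-1},v_{N-3},\dots,v_2$.  This is the order of a star polygon $\{N/k\}$: consecutive cycle vertices are $k$ apart in boundary order.  For an odd cycle this is the classical fact that pairwise-interleaving nonadjacent chords force the star order.

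The argument runs as follows.  Take the edge $v_0v_1$.  It splits the remaining $N-2$ cycle points into two arcs.  Every edge not meeting $v_0$ or $v_1$ must have one endpoint on each side.  The path $v_2v_3\cdots v_{N-1}$ therefore alternates sides, and $N-2$ is odd.  Iterating this alternation for every edge pins down the positions.  A clean way to organize it is induction: removing $v_{N-1}$ and joining $v_{N-2}$ to $v_0$ would give an odd cycle again only in the even-length bookkeeping, so instead I would argue directly.  The alternation shows that $v_2,v_4,\dots$ lie on one side of $v_0v_1$ and $v_3,v_5,\dots$ on the other.  Comparing the analogous splits for $v_1v_2$, $v_2v_3$, and so on then forces the monotone interleaving.

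\emph{Second step: the pendant.}  The pendant $v_N$ is adjacent only to $v_0$.  It must interleave with every cycle edge not containing $v_0$, that is, with $v_iv_{i+1}$ for $1\le i\le N-2$.  In the star order, the point antipodal to $v_0$ is the gap between $v_1$ and $v_{N-1}$, which are adjacent in boundary order.  I would check that $v_N$ must lie in that gap.  If $v_N$ were placed in any other gap, some edge $v_iv_{i+1}$ with $i\ne 0,N-1$ would have both endpoints on the same side of $v_0v_N$.  Only the gap opposite $v_0$ works.  Reversing labels ($v_i\mapsto v_{-i}$) together with orientation handles the mirror choice.  This yields \eqref{eq:boundaryorder}.

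\emph{Main obstacle.}  The main difficulty is making the first step rigorous without hand-waving: proving that pairwise interleaving of nonadjacent edges of an odd cycle in convex position forces the star order.  I would first try a counting argument.  For each edge, the two open arcs contain the other $N-2$ points, and interleaving with all $N-3$ nonincident edges forces each arc to contain exactly $k-1$ points, so the arcs are balanced.  Balance forces each vertex's two neighbors to sit at boundary distance exactly $k$ on either side.  That determines the order up to reflection.
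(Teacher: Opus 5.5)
Your argument follows the paper's outline. For each cycle edge $v_iv_{i+1}$, Lemma~\ref{lem:crossing} forces the path $v_{i+2}\cdots v_{i-1}$ to alternate across it, which gives $p_{i+1}-p_i\equiv\varepsilon_i k\pmod N$. Equal signs then give the star order, and the pendant is located by the same alternation of $v_1\cdots v_{N-1}$ across $v_0v_N$.

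You differ from the paper only in how the signs are fixed. The paper sums around the cycle and uses $\gcd(k,N)=1$ together with the oddness of $\sum_i\varepsilon_i$. Your ``neighbours at distance $k$ on either side'' is a local alternative. The distinct vertices $v_{i-1},v_{i+1}$ are both at cyclic distance $k$ from $v_i$, and only the two positions $p_i\pm k$ qualify; these are distinct because $2k=N-1\not\equiv0$. Hence the neighbours sit on opposite sides and $\varepsilon_{i-1}=\varepsilon_i$. This is shorter and avoids the number theory. You should state this reason explicitly, since it is the whole content of that sentence.

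One slip must be fixed. The arcs cut by $v_iv_{i+1}$ do not contain $k-1$ points each, since those would total $2k-2\neq N-2$. The $2k-1$ path vertices alternate, and the path starts and ends on the same side, so the counts are $k$ and $k-1$. Your own first step already shows this: $v_2,v_4,\dots,v_{N-1}$ lie on one side and $v_3,\dots,v_{N-2}$ on the other. The correct count gives steps of $k$ or $k+1\equiv-k$, that is, cyclic distance exactly $k$, which is what you then use. The ``iterate and compare splits'' sketch and the induction aside can be dropped in favour of this count.

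For the pendant, make the check explicit. In the star order the odd labels $v_{N-2},\dots,v_1$ occupy positions $1,\dots,k$, and the even labels $v_{N-1},\dots,v_2$ occupy positions $k+1,\dots,2k$. Alternation along $v_1\cdots v_{N-1}$ forces the two open arcs cut by $v_0v_N$ to be exactly these two blocks. This leaves only the gap $v_1\mid v_{N-1}$.
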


\begin{proof}
Remove $v_N$ temporarily, and let $p_i\in\mathbb Z/N\mathbb Z$ be the boundary position of $v_i$.  The cycle edges disjoint from $v_iv_{i+1}$ form a path on the other $N-2$ vertices.  By Lemma~\ref{lem:crossing}, successive vertices of this path alternate between the two open boundary arcs cut by $v_i,v_{i+1}$.  Their populations are $m-1$ and $m-2$.  Therefore
\[
 p_{i+1}-p_i=\eps_i(m-1)\pmod N,\qquad \eps_i\in\{-1,1\}.
\]
Summation around the odd cycle yields $(m-1)\sum_i\eps_i\equiv0\pmod N$.  Since $\gcd(m-1,N)=1$, the odd integer $\sum_i\eps_i$, whose absolute value is at most $N$, equals $N$ or $-N$.  All signs are equal.  Taking $p_0=0$ and choosing orientation gives $p_i=(m-1)i$.  As $(m-1)^{-1}\equiv-2\pmod N$, the cycle labels in boundary order are those in \eqref{eq:boundaryorder} with $v_N$ omitted.

The pendant diameter crosses every edge of the path $v_1,\ldots,v_{N-1}$.  Hence the path vertices again alternate across its two boundary arcs, with $m-1$ vertices on each.  In the order just found, the odd and even labels form two contiguous blocks.  The only nonempty admissible gap between them not incident with $v_0$ is $v_1\mid v_{N-1}$.
\end{proof}

Define the positive star angles by
\begin{equation}\label{eq:starangle}
 d_{i-1}\cdot d_i=-\cos\gamma_i,\qquad
 d_{i-1}\cross d_i=\sin\gamma_i.
\end{equation}
Every chord $v_{i-1}v_{i+1}$ has length $2\sin(\gamma_i/2)\leq1$.  Since Theorem~\ref{thm:FS} gives the complete diameter graph, equality would be an extra diameter, so
\begin{equation}\label{eq:anglebound}
 0<\gamma_i<\frac{\pi}{3}.
\end{equation}
The step circuit in the convex order has rotation number $m-1$: its directed turns are $\pi-\gamma_i$ and sum to $2\pi(m-1)$.  Consequently
\begin{equation}\label{eq:turningsum}
 \sum_{i=0}^{N-1}\gamma_i=\pi.
\end{equation}
One way to verify the rotation number is by continuously deforming the strictly convex boundary to a circle.  The direction of its $q$-step chord, $q=m-1$, winds once as the boundary parameter winds once; traversing the vertices in $q$-steps counts each positive gap between consecutive chord directions exactly $q$ times.

\section{Reuleaux representation and the pendant vertex}\label{sec:reuleaux}

Write the cycle vertices in convex-boundary order as
\begin{equation}\label{eq:xmap}
 x_j=v_{-2j\pmod N},\qquad j=0,\ldots,N-1.
\end{equation}
Since $N=2k+1$,
\[
 x_{j+k}=v_{-2j+1},\qquad x_{j+k+1}=v_{-2j-1},
\]
and both are at unit distance from $x_j$.  Together with $|x_i-x_j|\leq1$, these are precisely the standard vertex constraints for a Reuleaux $N$-gon; compare the variational vertex formulation in \cite{Bogosel2025}.  Explicitly, let
\[
 R=\bigcap_{j=0}^{N-1}B(x_j,1).
\]
The boundary arc centered at $x_j$ joins $x_{j+k}$ to $x_{j+k+1}$.  The centers are distinct and each arc has positive length.  At $v_i$ its unit endpoint radii are $-d_{i-1}$ and $d_i$, so its angular length is exactly $\gamma_i$.

The marked arc centered at $v_0=x_0$ has endpoints $v_1$ and $v_{N-1}$.  The pendant vertex lies in their boundary gap, on this arc.  Put
\[
 \gamma_0=2\theta,\qquad 0<\theta<\frac{\pi}{6}.
\]
For a unit circular segment of angle $z$, set
\[
 s(z)=\frac{z-\sin z}{2}.
\]
If the pendant divides the marked arc into angles $t$ and $2\theta-t$, polygonization loses $s(t)+s(2\theta-t)$.  Since $s''(z)=\sin z/2>0$ on $(0,\pi)$, the loss is uniquely minimized at $t=\theta$.  Thus the pendant is the midpoint of the marked arc and
\begin{equation}\label{eq:pendantbonus}
 \area(P)=A_S+p(\theta),
 \qquad p(\theta)=s(2\theta)-2s(\theta)
 =\sin\theta(1-\cos\theta),
\end{equation}
where $A_S$ is the area of the polygonal Reuleaux skeleton.  In diameter-cycle order and the chosen orientation,
\begin{equation}\label{eq:skeletonarea}
 A_S=\frac12\sum_{i=0}^{N-1}v_i\cross v_{i-2}.
\end{equation}

The next lemma justifies differentiating the reduced expression; midpoint optimality alone would not suffice.

\begin{lemma}[Local elimination of the pendant vertex]\label{lem:envelope}
Let $(v^*,w^*)$ be a global maximizer with the structure above, and let
\[
 \mathcal M=\{v:g_i(v)=\tfrac12(|v_{i+1}-v_i|^2-1)=0,\ i\in\mathbb Z/N\mathbb Z\}.
\]
In a neighborhood of $v^*$ in $\mathcal M$, the midpoint $w(v)$ of the marked shorter arc is smooth, $(v,w(v))$ remains a feasible small strictly convex polygon, and its area is $A_S(v)+p(\theta(v))$.  Therefore $v^*$ is a local maximizer of the reduced objective on $\mathcal M$.
\end{lemma}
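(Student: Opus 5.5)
Fix $\delta>0$ small and work in $\mathcal M\cap B(v^*,\delta)$.

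\emph{Smoothness of the pendant.} Write $\theta(v)$ for half the angle at $v_0$ between $v_1-v_0$ and $v_{N-1}-v_0$. Both vectors have unit length on $\mathcal M$, so
\[
w(v)=v_0+\frac{(v_1-v_0)+(v_{N-1}-v_0)}{|(v_1-v_0)+(v_{N-1}-v_0)|}.
\]
At $v^*$ the denominator equals $2\cos\theta^*>0$, because $0<\theta^*<\pi/6$ by \eqref{eq:anglebound}. Hence $w$ is real-analytic near $v^*$, and $\theta(v)$, $p(\theta(v))$ and $A_S(v)$ are smooth there too.

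\emph{Feasibility.} This step needs the most care. By Lemma~\ref{lem:hull} and Proposition~\ref{prop:attain}, feasibility only requires that all pairwise distances be at most $1$. On $\mathcal M$ the cycle pairs have distance exactly $1$, and the pair $v_0w$ has $|w-v_0|=1$ by construction. Every other pair has distance strictly less than $1$ at $(v^*,w^*)$, because Theorem~\ref{thm:FS} says the diameter graph consists exactly of the $N$ cycle edges together with $v_0v_N$. These finitely many strict inequalities persist under small perturbation, by continuity of $v\mapsto(v,w(v))$.

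Strict convexity in the order \eqref{eq:boundaryorder} is likewise an open condition: positive orientation of each consecutive triple. So it also persists for small $\delta$. In the same way, the angles $\gamma_i(v)$ stay in $(0,\pi/3)$, and the step circuit keeps its rotation number, since that is an integer depending continuously on $v$. Therefore \eqref{eq:turningsum} and the Reuleaux description of Section~\ref{sec:reuleaux} remain valid. In particular, $w(v)$ is the midpoint of the boundary arc centered at $v_0$ between $v_1$ and $v_{N-1}$, and $w(v)$ lies in the gap between $v_1$ and $v_{N-1}$ in the boundary order.

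\emph{Area formula.} The perturbed polygon $(v,w(v))$ is convex with the same combinatorial order. Its area therefore equals the shoelace expression in that order. That expression splits into the skeleton shoelace sum \eqref{eq:skeletonarea} plus the area of the triangle $v_1\,w\,v_{N-1}$ attached across the chord $v_1v_{N-1}$. This triangle area is the segment computation of \eqref{eq:pendantbonus} at $t=\theta$, namely $s(2\theta)-2s(\theta)=p(\theta)$. Alternatively, it follows directly from the fact that $w-v_0$ bisects the angle between two unit vectors of half-angle $\theta$. So the area is $A_S(v)+p(\theta(v))$.

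\emph{Local maximality.} For $v$ in the neighborhood, $(v,w(v))$ is a feasible small $n$-gon, so
\[
A_S(v)+p(\theta(v))\le A_n=A_S(v^*)+p(\theta(v^*)).
\]
The equality holds because $w^*$ is the midpoint by the one-variable argument above, and hence $w^*=w(v^*)$. This gives the local maximality of $v^*$.
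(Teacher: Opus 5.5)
Your proof is correct and follows the paper's own argument. The steps are the same: the explicit bisector formula for $w(v)$ is smooth, the finitely many strict non-diameter distance inequalities and strict convexity persist under small perturbation, the area equals $A_S(v)+p(\theta(v))$, and local maximality follows from global maximality. Your check that the denominator equals $2\cos\theta^*>0$, and your direct computation $\sin\theta-\tfrac12\sin 2\theta=p(\theta)$ for the area of the triangle $v_1\,w\,v_{N-1}$, simply spell out steps that the paper leaves implicit.
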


\begin{proof}
At the maximizer the full diameter graph is exactly the cycle plus the pendant edge.  Every other distance is strictly below one; finitely many strict inequalities have a common continuity margin.  Strict convex order of the full polygon and positive arc lengths also persist under small perturbations.  The midpoint of the nonzero marked shorter arc is the smooth function
\[
 w(v)=v_0+\frac{(v_1-v_0)+(v_{N-1}-v_0)}
 {|(v_1-v_0)+(v_{N-1}-v_0)|}.
\]
Its nonactive distances remain below one by continuity.  Thus every sufficiently close point of the equality manifold has a feasible midpoint completion, with the same boundary order and area \eqref{eq:pendantbonus}.  Global maximality implies the claimed local maximality.
\end{proof}

\begin{lemma}[Independence of the cycle constraints]\label{lem:licq}
The gradients of the $N$ cycle constraints $g_i$ are linearly independent at every configuration above.
\end{lemma}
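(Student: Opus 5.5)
Write the gradients explicitly.  Since $g_i=\tfrac12(|v_{i+1}-v_i|^2-1)$, its gradient with respect to $(v_0,\ldots,v_{N-1})\in(\R^2)^N$ has exactly two nonzero blocks: $-d_i$ in the $v_i$ slot and $+d_i$ in the $v_{i+1}$ slot.  Suppose $\sum_i\lambda_i\nabla g_i=0$.  Reading off the $v_i$ block gives the $N$ vector equations
\[
 \lambda_{i-1}d_{i-1}-\lambda_i d_i=0,\qquad i\in\mathbb Z/N\mathbb Z .
\]
The plan is to show that these force every $\lambda_i=0$, using only that consecutive cycle edges are not parallel.

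By \eqref{eq:starangle} and \eqref{eq:anglebound}, $d_{i-1}\cross d_i=\sin\gamma_i>0$, so $d_{i-1}$ and $d_i$ are linearly independent in $\R^2$.  The equation $\lambda_{i-1}d_{i-1}=\lambda_i d_i$ therefore forces $\lambda_{i-1}=\lambda_i=0$.  A single index already gives two zero multipliers, and applying the argument at every $i$ gives $\lambda\equiv0$.  Hence the $N$ gradients are linearly independent.

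The only point needing care is that the angle bound \eqref{eq:anglebound} holds on the whole neighbourhood used in Lemma~\ref{lem:envelope}, and not only at the maximizer.  It suffices to have $0<\gamma_i<\pi$, which is equivalent to $d_{i-1}\cross d_i\neq0$.  This strict inequality is open, so it persists near $v^*$ on $\mathcal M$, by the same continuity-margin argument as in Lemma~\ref{lem:envelope}.  It also holds for any Reuleaux-type configuration as described, because the arc lengths are positive and the centers are distinct.  Beyond this, there is no real obstacle: the proof is a block-by-block reading of a sparse Jacobian.
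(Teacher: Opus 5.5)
Your proof is correct and is essentially the paper's argument: reading off the $v_i$-block gives $\lambda_{i-1}d_{i-1}-\lambda_i d_i=0$, and the noncollinearity of consecutive diameters from \eqref{eq:anglebound} forces all multipliers to vanish. Your extra remark is a harmless addition that the paper leaves implicit: $d_{i-1}\cross d_i\neq0$ is an open condition, so it persists on the neighbourhood used in Lemma~\ref{lem:envelope}.
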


\begin{proof}
If $\sum_i a_i\nabla g_i=0$, then at $v_i$
\[
 a_{i-1}d_{i-1}-a_i d_i=0.
\]
Consecutive cycle diameters are noncollinear by \eqref{eq:anglebound}, hence $a_{i-1}=a_i=0$.  This holds for every $i$.
\end{proof}

Lemma~\ref{lem:envelope} gives a smooth reduced problem near a maximizer.  By Lemma~\ref{lem:licq}, its equality constraints are independent there, so the usual equality-constrained KKT conditions apply.

\section{First-order equations and conserved quantities}\label{sec:noether}

Let $J(x_1,x_2)=(-x_2,x_1)$, so $(Jx)\cdot y=x\cross y$.  Differentiating \eqref{eq:skeletonarea} gives
\begin{equation}\label{eq:ASgrad}
 \nabla_{v_i}A_S=\frac12J(d_{i-2}+d_{i-1}+d_i+d_{i+1}).
\end{equation}
Moreover
\[
 p'(\theta)=(1-\cos\theta)(1+2\cos\theta),\qquad
 \eta=\frac{p'(\theta)}{2\sin(2\theta)}>0.
\]
On the cycle equality manifold we may extend the marked angle locally by
\[
 \widetilde\theta=\frac12\arccos(-d_{-1}\cdot d_0).
\]
It agrees there with the geometric angle defined using normalized edge directions.  The gradients of the two extensions differ only by a linear combination of $\nabla g_{-1}$ and $\nabla g_0$, which is absorbed by shifting the neighboring cycle multipliers.  Differentiating this gauge extension gives the KKT-equivalent pendant forces
\begin{equation}\label{eq:forces}
 f_{-1}=-\eta d_0,\qquad
 f_0=\eta(d_0-d_{-1}),\qquad
 f_1=\eta d_{-1},\qquad f_i=0\ \text{otherwise}.
\end{equation}
They have zero total force and torque.  With multipliers $\lambda_i$, KKT reads
\begin{equation}\label{eq:kkt}
 \frac12J(d_{i-2}+d_{i-1}+d_i+d_{i+1})
 +\lambda_{i-1}d_{i-1}-\lambda_i d_i+f_i=0.
\end{equation}

At a bulk index, projection of two neighboring equations gives
\begin{equation}\label{eq:bulk-lambda}
 2\lambda_i\sin\gamma_i
 =1-\cos\gamma_{i-1}-\cos\gamma_i
  +\cos(\gamma_i+\gamma_{i+1}).
\end{equation}
The four edges $-2,-1,0,1$ have the corrections caused by \eqref{eq:forces}; the raw forms used below are recorded in the proof of Lemma~\ref{lem:branch}.  We will also use the seven residue classes
\[
 -3,-2,-1,0,1,2,3,
\]
which are distinct because $N\geq7$ (equivalently, $n\geq8$).

Define the discrete translation momentum
\begin{equation}\label{eq:momentum}
 P_i=\frac12J(v_{i-1}+v_i+v_{i+1}+v_{i+2})-\lambda_i d_i.
\end{equation}
Subtracting consecutive terms and using \eqref{eq:kkt} yields
\begin{equation}\label{eq:momentumjump}
 P_i-P_{i-1}=-f_i.
\end{equation}
Thus $P_i$ is constant in the bulk.  In particular,
\begin{equation}\label{eq:Pjumps}
 P_{-2}=P_1=:P,\qquad
 P_{-1}=P+\eta d_0,\qquad P_0=P+\eta d_{-1}.
\end{equation}
A translation by $q$ changes $P$ to $P+2Jq$, so a unique translation makes the bulk momentum zero.  We call these \emph{momentum-centered coordinates}.

For rotation, define
\begin{equation}\label{eq:angularmomentum}
 M_i=v_i\cross P_i-|v_i|^2-\frac12d_{i-1}\cdot d_i.
\end{equation}
In the bulk, direct subtraction gives $M_i=M_{i-1}$.  Under translation the invariant scalar is
\[
 C=M-\frac14|P|^2.
\]
Put $u_i=|v_i|^2$ in momentum-centered coordinates.  The bulk identity and the two jump cancellations in \eqref{eq:Pjumps} give
\begin{equation}\label{eq:noether-relations}
 u_i+C=\frac12\cos\gamma_i\quad(i\neq0),
\end{equation}
Direct subtraction in \eqref{eq:angularmomentum} gives
\[
 M_i-M_{i-1}=v_i\cross(P_i-P_{i-1})=-v_i\cross f_i.
\]
Taking the bulk value to be $C$ in momentum-centered coordinates, the jump at $-1$ cancels $v_{-1}\cross P_{-1}$, while at $0$
\[
 M_0-v_0\cross P_0=C-\eta(d_{-1}\cross d_0)
 =C-\frac12p'(\theta).
\]
The third jump restores the bulk value because the total pendant torque is zero.  Therefore, at the marked vertex,
\begin{equation}\label{eq:marked-noether}
 u_0+C=\frac12\cos(2\theta)+\frac12p'(\theta)
 =\frac12\cos\theta.
\end{equation}
The last equality follows from $\cos(2\theta)+p'(\theta)=\cos\theta$.  Thus the marked index has the modified identity \eqref{eq:marked-noether}, rather than the bulk relation \eqref{eq:noether-relations}.

For the oriented unit edge $d_i$, define the signed altitude of the centered origin by
\[
 h_i=d_i\cross(-v_i).
\]

\begin{lemma}[Sign of the center altitudes]\label{lem:branch}
Every global maximizer satisfies $h_i>0$ for all $i$.
\end{lemma}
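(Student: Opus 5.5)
The plan is to turn the KKT system \eqref{eq:kkt} in momentum-centered coordinates into a closed trigonometric formula for each $h_i$, and then prove that formula is positive using \eqref{eq:anglebound}, \eqref{eq:turningsum} and, where needed, global maximality.

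\emph{Step 1: a formula for $h_i$ at bulk edges.} In momentum-centered coordinates $P_i=0$ in the bulk, so \eqref{eq:momentum} gives $\tfrac12 J s_i=\lambda_i d_i$, where
\[
 s_i=v_{i-1}+v_i+v_{i+1}+v_{i+2}=4v_i-d_{i-1}+2d_i+d_{i+1}.
\]
Hence $s_i=-2\lambda_i Jd_i$. The tangential part $s_i\cdot d_i=0$ expresses $v_i\cdot d_i$ through $\cos\gamma_i$ and $\cos\gamma_{i+1}$; this is consistent with \eqref{eq:noether-relations}, and I will use it as a check. The normal part, obtained by crossing with $d_i$, gives
\[
 4h_i=2\lambda_i+\sin\gamma_i+\sin\gamma_{i+1}.
\]
Multiplying by $\sin\gamma_i$ and inserting \eqref{eq:bulk-lambda}, the term $\sin\gamma_i\sin\gamma_{i+1}$ cancels against the one in $\cos(\gamma_i+\gamma_{i+1})$. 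This leaves
\[
 4h_i\sin\gamma_i=(1-\cos\gamma_{i-1})+\sin^2\gamma_i-\cos\gamma_i(1-\cos\gamma_{i+1}).
\]
So at bulk edges the claim reduces to positivity of this explicit three-angle expression.

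\emph{Step 2: the four pendant-corrected edges.} For $i\in\{-2,-1,0,1\}$ I repeat the projection with the jumps \eqref{eq:Pjumps} and the forces \eqref{eq:forces}. Here $P_i\neq0$, and $\eta=p'(\theta)/(2\sin2\theta)$ enters. I expect analogous formulas in which $\cos(2\theta)$ is effectively replaced by $\cos\theta$, as in \eqref{eq:marked-noether}. I would record these raw forms explicitly, since the paper says they are stated in this proof. The added pendant terms should carry a favorable sign because $p'(\theta)>0$.

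\emph{Step 3: positivity, the main obstacle.} The bulk expression is not positive for arbitrary angles in $(0,\pi/3)$. To second order it is about $\tfrac12\gamma_{i-1}^2+\gamma_i^2-\tfrac12\gamma_{i+1}^2$, which is negative if $\gamma_{i+1}$ is much larger than both neighbors. So the argument must use more than \eqref{eq:anglebound}.

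My first attempt is to combine the Noether relations \eqref{eq:noether-relations}. Since $u_{i+1}-u_i=2v_i\cdot d_i+1$ and $u_i=\tfrac12\cos\gamma_i-C$, the radii are controlled by the angles, and $h_i^2=u_i-(v_i\cdot d_i)^2$. This gives $h_i^2$ as an explicit function of the angles and $C$. It should then suffice to rule out $h_i=0$ and to use continuity or connectedness together with the sign at a symmetric configuration.

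If that fails, I would fall back on global maximality. If $h_i\le0$, the centered origin lies on the wrong side of the diameter line $v_iv_{i+1}$. In that case I would try to exhibit an admissible perturbation on $\mathcal M$ that strictly increases $A_S+p(\theta)$, using Lemma~\ref{lem:envelope} to stay feasible. Natural candidates are the second-order condition in the direction rotating $d_i$ about its midpoint, or a local reflection exchanging $\gamma_i$ and $\gamma_{i+1}$. The seven distinct residue classes $-3,\dots,3$ (from $N\ge7$) are what make the bulk formula valid at the neighbours of the corrected edges, so the case analysis closes.
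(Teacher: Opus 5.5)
Your Step 1 is correct and matches the paper's starting point: $4h_i=2\lambda_i+\sin\gamma_i+\sin\gamma_{i+1}$. Inserting the left projection \eqref{eq:bulk-lambda} gives your three-angle formula for $4h_i\sin\gamma_i$, and you are right that this formula alone is not sign-definite. The proof stops there, however. Step 3 lists strategies, none is carried out, and none would close as described. The identity $h_i^2=u_i-(v_i\cdot d_i)^2$ only controls $|h_i|$, so it cannot give the sign. A continuity or connectedness argument would need a path of stationary configurations from a symmetric one to the maximizer along which $h_i$ never vanishes. Nothing supplies such a path, and you have not shown $h_i\neq0$ either. The second-order or perturbation fallback is never specified. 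Step 2 is also only a guess, and the stated reason for its sign is wrong. The right representation \eqref{eq:lminus2} of $\lambda_{-2}$ contains $-p'(\theta)$, and $4h_{-1}$ contains $-2\eta\cos(2\theta)$. Both terms push the altitude down, not up.

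The missing idea is purely local. The multiplier $\lambda_i$ also appears in the KKT equation at $v_{i+1}$, and crossing that equation with $d_{i+1}$ gives a second representation
\[
 2\lambda_i\sin\gamma_{i+1}=1-\cos\gamma_{i+2}-\cos\gamma_{i+1}+\cos(\gamma_i+\gamma_{i+1}).
\]
Hence, with $x=\cos\gamma_i$ and $y=\cos\gamma_{i+1}$,
\[
 4h_i\sin\gamma_{i+1}=(1-\cos\gamma_{i+2})+(1-y)+y(x-y),
\]
while your formula reads $4h_i\sin\gamma_i=(1-\cos\gamma_{i-1})+(1-x)+x(y-x)$. If $\gamma_i\ge\gamma_{i+1}$, your formula is positive. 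If $\gamma_{i+1}\ge\gamma_i$, the second one is. Your bad regime, with $\gamma_{i+1}$ large, is exactly where the second formula works. So, contrary to your remark, nothing beyond stationarity and $0<\gamma_j<\pi/2$ is needed: no global maximality, no second-order condition, no continuity argument. This two-case split is the paper's proof, phrased as a contradiction with $\cos\gamma_{i-1}\le1$ or $\cos\gamma_{i+2}\le1$.

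At the four pendant edges the same split works once the corrections are combined correctly. At edge $-2$, the identity $\cos(2\theta)+p'(\theta)=\cos\theta$ turns $(1-\cos(2\theta))-p'(\theta)$ into the slack $1-\cos\theta>0$. So your guess that $\cos(2\theta)$ is effectively replaced by $\cos\theta$ is right, but not because the pendant terms are favorable. At edge $-1$, the two cases leave the extra positive terms $2\eta\cos\gamma_{-1}\sin(2\theta)$ and $p'(\theta)(1-\cos(2\theta))$. Edges $0$ and $1$ follow by reversing the local index order.
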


\begin{proof}
For a bulk edge, \eqref{eq:momentum} with $P_i=0$ gives
\begin{equation}\label{eq:hbulk}
 h_i=\frac{\sin\gamma_i+\sin\gamma_{i+1}}4+\frac{\lambda_i}{2}.
\end{equation}
Let $a,b,c,d\in(0,\pi/2)$ be four consecutive star angles and let $\lambda$ be the multiplier between $b,c$.  The two bulk projections are
\[
 2\lambda\sin b=1-\cos a-\cos b+\cos(b+c),
\]
\[
 2\lambda\sin c=1-\cos d-\cos c+\cos(b+c).
\]
If $h\leq0$ and $b\geq c$, the first equation implies
\[
 \cos a\geq F(b,c):=2-\cos b+\cos b\cos c-\cos^2b>1,
\]
because, with $x=\cos b\leq y=\cos c$, one has
$F(b,c)-1=(1-x)+x(y-x)>0$.  If $c\geq b$, the second equation gives the symmetric contradiction $\cos d>1$.

Four edges touch the pendant stencil.  For $\lambda_{-2}$, write
$b=\gamma_{-2}$, $r=\gamma_{-1}$, and $c=2\theta$.  Its left representation is the bulk one, while the right representation is
\begin{equation}\label{eq:lminus2}
 2\lambda_{-2}\sin r
 =1-\cos c-\cos r+\cos(b+r)-p'(\theta).
\end{equation}
If $b\geq r$, the preceding bulk contradiction applies.  If $r\geq b$ and $h_{-2}\leq0$, then
\[
 \cos c\geq F(r,b)-p'(\theta)>\cos c,
\]
because
\begin{align*}
F(r,b)-p'(\theta)-\cos c
 &= [F(r,b)-1]+[1-\cos(2\theta)-p'(\theta)]\\
 &= [F(r,b)-1]+1-\cos\theta>0.
\end{align*}
The edge $1$ follows from the same algebra after reversing the local index order; no symmetry of the unknown polygon is assumed.

For the central edge $-1$, \eqref{eq:Pjumps} gives
\[
 4h_{-1}=\sin r+\sin c+2\lambda_{-1}-2\eta\cos c.
\]
Its two projected equations are
\begin{align*}
2\lambda_{-1}\sin r
 &=1-\cos b-\cos r+\cos(r+c)+2\eta\sin(r+c),\\
2\lambda_{-1}\sin c
 &=1-\cos d-\cos c+\cos(r+c)+p'(\theta).
\end{align*}
If $h_{-1}\leq0$, the first equation gives $\cos b\geq F(r,c)+2\eta\cos r\sin c>1$ when $r\geq c$, and the second gives $\cos d\geq F(c,r)+p'(\theta)(1-\cos c)>1$ when $c\geq r$.  Both are impossible.  The edge $0$ follows by the same index-reversal calculation, again without a symmetry assumption.  All angles used here lie in $(0,\pi/2)$ by \eqref{eq:anglebound} and $\theta<\pi/6$.
\end{proof}

The positivity in Lemma~\ref{lem:branch} makes each triangle with vertices $0,v_i,v_{i+1}$ nondegenerate and gives all of them the same orientation.

\section{Variational formulation}\label{sec:action}

In the variables
\[
 X=(C,u_0,\ldots,u_{N-1})\in\R^{N+1},
\]
define
\begin{equation}\label{eq:anglesfromX}
 \theta=\arccos(2(u_0+C)),\qquad
 \gamma_0=2\theta,\qquad
 \gamma_i=\arccos(2(u_i+C))\quad(i\neq0).
\end{equation}
Define $\Dnat$ by the cyclic conditions
\begin{equation}\label{eq:Dnat}
 u_i>0,\qquad 0<2(u_i+C)<1,\qquad
 |\sqrt{u_i}-\sqrt{u_{i+1}}|<1<\sqrt{u_i}+\sqrt{u_{i+1}}.
\end{equation}
Every global maximizer gives a point of $\Dnat$.  The angle inequalities follow from Section~\ref{sec:structure}, while Lemma~\ref{lem:branch} gives positive, nondegenerate triangles with the centered origin and hence the strict triangle inequalities.

\begin{lemma}[Convexity of the triangle domain]\label{lem:Dconvex}
$\Dnat$ is open and convex.
\end{lemma}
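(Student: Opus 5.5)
Since $\Dnat$ is cut out of $\R^{N+1}$ by finitely many conditions, the plan is to show that each condition in \eqref{eq:Dnat} is a strict inequality between continuous functions defining an open convex set. Openness is immediate, because every constraint is a strict inequality of functions continuous on the open set $\{u_i>0\}$: $\sqrt{u}$ is continuous for $u>0$. The domain is the intersection of the sets defined by the individual conditions, so convexity follows once each set is shown to be convex.

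The first two families are affine. The conditions $u_i>0$ and $0<2(u_i+C)<1$ are open half-spaces in the variables $X=(C,u_0,\ldots,u_{N-1})$, hence convex. The marked index needs no separate treatment here: its condition also reads $0<2(u_0+C)<1$, and the only difference is that \eqref{eq:anglesfromX} interprets that quantity as $\cos\theta$ rather than $\cos\gamma_0$.

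The substantive step is the triangle condition for a fixed pair $(u_i,u_{i+1})=(a,b)$ with $a,b>0$. I would pass to the radii $r=\sqrt a$, $s=\sqrt b$, so the condition becomes $|r-s|<1<r+s$, and then return to $(a,b)$.

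For the lower bound $1<\sqrt a+\sqrt b$ I first tried concavity, but that gives the wrong direction: since $(a,b)\mapsto\sqrt a+\sqrt b$ is concave, its superlevel set $\{\sqrt a+\sqrt b>1\}$ is convex, but that superlevel set is the reverse of what the argument needs. So I would instead write the exact squared equivalents and inspect the region. We have $|r-s|<1$ if and only if $a+b-1<2\sqrt{ab}$, that is, either $a+b<1$, or $(a+b-1)^2<4ab$ with $a+b\geq1$. Likewise $r+s>1$ if and only if $a+b>1$, or $(1-a-b)^2<4ab$ with $a+b\le1$. Combined, the two conditions say exactly $(a+b-1)^2<4ab$, which is the quadratic form
\[
Q(a,b)=a^2+b^2-2ab-2a-2b+1<0 .
\]
The quadratic part of $Q$ is $(a-b)^2$, which is positive semidefinite, so $Q$ is a convex function. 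Its strict sublevel set $\{Q<0\}$ is therefore convex: it is the interior of a parabola, tangent to both coordinate axes, and it lies in the open positive quadrant.

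The step that needs care is the equivalence between $(a+b-1)^2<4ab$ and the pair of inequalities $|\sqrt a-\sqrt b|<1<\sqrt a+\sqrt b$. The equivalence holds because
\[
4ab-(a+b-1)^2=(1-(r-s)^2)\bigl((r+s)^2-1\bigr).
\]
Both factors are positive exactly when the two triangle inequalities hold. If both factors were negative, we would need $|r-s|>1$ and $r+s<1$ at once, which is impossible for $r,s>0$. Hence $\{Q<0\}\cap\{a,b>0\}$ is exactly the triangle region.

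Finally, the cylinder over a convex set in the coordinates $(u_i,u_{i+1})$ is convex in $\R^{N+1}$. Intersecting the finitely many convex open sets obtained above over the cyclic index $i$ gives that $\Dnat$ is open and convex.
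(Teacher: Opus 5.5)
Your argument is correct, but it handles the one substantive step differently from the paper. The paper treats the two triangle inequalities separately. It shows $\{\sqrt u+\sqrt v>1\}$ is convex as a strict superlevel set of the concave function $\sqrt u+\sqrt v$. It shows $\{(\sqrt u-\sqrt v)^2<1\}$ is convex because $u+v-2\sqrt{uv}$ is convex, the geometric mean being concave.

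You instead merge both inequalities into the single polynomial condition $Q(a,b)<0$. This rests on the factorization $4ab-(a+b-1)^2=(1-(r-s)^2)((r+s)^2-1)$ and on the fact that both factors cannot be negative when $r,s>0$. Note that $-Q(a,b)=2ab+2a+2b-a^2-b^2-1=16\Delta(a,b)^2$. So your route identifies the triangle region with the set where the Heron radicand in \eqref{eq:Delta} is positive, which is a concave quadratic. This avoids square roots and the concavity of the geometric mean, makes openness trivial, and ties the domain directly to the function $\Delta$ that drives the later Hessian computation. The paper's route is shorter and needs no factorization or case analysis.

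One correction: your aside that the concavity argument \emph{gives the wrong direction} is mistaken. The condition $1<\sqrt a+\sqrt b$ is exactly the strict superlevel set $\{\sqrt a+\sqrt b>1\}$. Strict superlevel sets of concave functions are convex, so that argument works as stated; it is precisely the one the paper uses. Your final proof does not depend on the aside, so its validity is unaffected, but the remark should be deleted.
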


\begin{proof}
The first two groups in \eqref{eq:Dnat} are open half-spaces.  On the positive quadrant, $\sqrt u+\sqrt v$ is concave, so its strict superlevel set $\sqrt u+\sqrt v>1$ is convex.  The other triangle inequality is
\[
 g(u,v):=(\sqrt u-\sqrt v)^2=u+v-2\sqrt{uv}<1.
\]
The geometric mean is concave; equivalently, its Hessian has nonpositive trace and zero determinant.  Thus $g$ is convex and its strict sublevel set is convex.  Intersecting these sets proves the claim.
\end{proof}

For a triangle with side lengths $\sqrt u,\sqrt v,1$, let
\begin{equation}\label{eq:Delta}
 \Delta(u,v)=\frac14\sqrt{2uv+2u+2v-u^2-v^2-1}
\end{equation}
be its area.  Denote by $\alpha$ and $\beta$ its base angles at the vertices of radial lengths $\sqrt u$ and $\sqrt v$.  Define
\begin{equation}\label{eq:E}
 E(u,v)=4\Delta-2u\alpha-2v\beta.
\end{equation}
Elementary differentiation of the cosine law gives
\begin{equation}\label{eq:Ederivatives}
 E_u=-2\alpha,\quad E_v=-2\beta,\quad E_{uv}=-\frac1{2\Delta},
\end{equation}
\[
 E_{uu}=\frac{u+v-1}{4u\Delta},\qquad
 E_{vv}=\frac{u+v-1}{4v\Delta}.
\]
For $x\in(0,\pi/2)$ determined by $\cos x=2(u+C)$, set
\begin{equation}\label{eq:V}
 V_C(u)=x\cos x-\sin x.
\end{equation}
Then
\begin{equation}\label{eq:Vderivatives}
 (V_C)_u=(V_C)_C=2x,\qquad
 (V_C)_{uu}=(V_C)_{uC}=(V_C)_{CC}=-\frac4{\sin x}.
\end{equation}

Define
\begin{equation}\label{eq:Phi}
 \boxed{\quad
 \Phi_N(X)=\sum_{i=0}^{N-1}E(u_i,u_{i+1})
 +2V_C(u_0)+\sum_{i=1}^{N-1}V_C(u_i)-2\pi C.
 \quad}
\end{equation}
The function $\Phi_N$ is an auxiliary variational object; it is not the polygonal area.

Let $\alpha_i,\beta_i$ be the base angles in the triangle $(0,v_i,v_{i+1})$.  Positivity of the altitudes gives
\begin{equation}\label{eq:anglematching}
 \gamma_i=\beta_{i-1}+\alpha_i\quad(i\neq0),\qquad
 2\theta=\beta_{N-1}+\alpha_0.
\end{equation}
Using \eqref{eq:Ederivatives}--\eqref{eq:Vderivatives},
\begin{align}
 \frac{\partial\Phi_N}{\partial u_i}
 &=2(\gamma_i-\beta_{i-1}-\alpha_i),&&i\neq0,\label{eq:gradui}\\
 \frac{\partial\Phi_N}{\partial u_0}
 &=2(2\theta-\beta_{N-1}-\alpha_0),\label{eq:gradu0}\\
 \frac{\partial\Phi_N}{\partial C}
 &=2\left(2\theta+\sum_{i=1}^{N-1}\gamma_i-\pi\right).
 \label{eq:gradC}
\end{align}
Equations \eqref{eq:anglematching} and \eqref{eq:turningsum} therefore give
\begin{equation}\label{eq:bridge}
 \text{reduced KKT and positive-altitude branch}\quad\Longrightarrow\quad
 \nabla\Phi_N(X)=0.
\end{equation}

\section{Strict concavity}\label{sec:concavity}

For adjacent variables define the local block
\begin{equation}\label{eq:L}
 L(C,u,v)=E(u,v)+\frac12V_C(u)+\frac12V_C(v).
\end{equation}
Let $x,y\in(0,\pi/2)$ satisfy $\cos x=2(u+C)$ and $\cos y=2(v+C)$, and put $s_x=\sin x$, $s_y=\sin y$.  From \eqref{eq:Ederivatives} and \eqref{eq:Vderivatives}, the matrix $K=-D^2L$ in variables $(C,u,v)$ is
\begin{equation}\label{eq:K}
 K=\begin{pmatrix}
 \frac2{s_x}+\frac2{s_y}&\frac2{s_x}&\frac2{s_y}\\[2mm]
 \frac2{s_x}&\frac{1-u-v}{4u\Delta}+\frac2{s_x}&\frac1{2\Delta}\\[2mm]
 \frac2{s_y}&\frac1{2\Delta}&\frac{1-u-v}{4v\Delta}+\frac2{s_y}
 \end{pmatrix}.
\end{equation}

The first principal minor is positive.  For the remaining two, put
\[
 p=\frac{x+y}{2},\quad q=\frac{|x-y|}{2},\quad
 a=\sin p,\quad b=\cos q,\quad c=\sin q,
\]
\[
 d=|u-v|=ac,\qquad S=u+v,\qquad t=4\Delta.
\]
The identity $d=ac$ follows by subtracting the two cosine relations; Heron's identity and the sine addition formula give
\begin{equation}\label{eq:heronhalf}
 t^2=2S-d^2-1,\qquad s_x+s_y=2ab.
\end{equation}

Direct expansion of \eqref{eq:K} gives
\begin{equation}\label{eq:detK}
 \det K=\frac{S-d^2-tab}{\Delta s_xs_yuv}.
\end{equation}
The numerator $Q=S-d^2-tab$ has the exact factorization
\begin{equation}\label{eq:Qfactor}
 \boxed{\quad 2Q=(t-ab)^2+\cos^2p>0.\quad}
\end{equation}
This follows after substituting $S=(t^2+d^2+1)/2$, $d=ac$, and $b^2+c^2=1$.

For the second principal minor, permute $u,v$ if needed.  Take the second coordinate to be
\[
 w=\max(u,v)=\frac{S+d}{2}.
\]
Its positive denominator may be suppressed; the numerator is
\begin{equation}\label{eq:Rraw}
 R=t(S+d)+2ab(1-S).
\end{equation}
Using only the first relation in \eqref{eq:heronhalf},
\begin{equation}\label{eq:Rfactor}
 \boxed{\quad
 R=ab(1-d^2)+t^2(1+d-ab)+\frac t2(t-1-d)^2>0.
 \quad}
\end{equation}
Here $0<p<\pi/2$, $0\leq q<\pi/4$, so $a,b,t>0$ and $0\leq d<1$.  Moreover
\[
 1+d-ab=1-a(b-c)>0,
\]
because $a<1$ and $0<b-c\leq1$.  The selected principal minors are therefore positive, and Sylvester's criterion yields
\begin{equation}\label{eq:localconcave}
 D^2L(C,u,v)\prec0
\end{equation}
throughout the domain defined by the triangle inequalities in \eqref{eq:Dnat}.

Rewrite the action as
\begin{equation}\label{eq:Phiblocks}
 \Phi_N=\sum_{i=0}^{N-1}L(C,u_i,u_{i+1})+V_C(u_0)-2\pi C.
\end{equation}
Each embedded edge Hessian is negative semidefinite in the global variables and strictly negative on a variation for which $(\delta C,\delta u_i,\delta u_{i+1})\neq0$.  Every nonzero global variation activates at least one such triple.  The extra $V_C(u_0)$ has negative-semidefinite Hessian, and the last term is linear.  Hence
\begin{theorem}[Concavity of the variational function]\label{thm:concavity}
For every odd $N\geq7$,
\[
 D^2\Phi_N(X)\prec0\qquad(X\in\Dnat).
\]
Consequently $\Phi_N$ has at most one critical point in the convex domain $\Dnat$.
\end{theorem}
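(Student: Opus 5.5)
The plan is to combine the local negative definiteness \eqref{eq:localconcave} with the block decomposition \eqref{eq:Phiblocks}. That gives $D^2\Phi_N\prec0$ on $\Dnat$. Uniqueness of the critical point then follows from convexity of $\Dnat$ (Lemma~\ref{lem:Dconvex}).

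\textbf{Step 1: the Hessian as a sum of edge forms.} Fix $X\in\Dnat$ and a nonzero variation $\xi=(\delta C,\delta u_0,\ldots,\delta u_{N-1})$. I will write
\[
 \xi^{T}D^2\Phi_N(X)\xi=\sum_{i=0}^{N-1}\xi_i^{T}D^2L(C,u_i,u_{i+1})\xi_i+(V_C)''\text{-term at }u_0,
\]
with $\xi_i=(\delta C,\delta u_i,\delta u_{i+1})$; the term $-2\pi C$ contributes nothing. Each triple $(C,u_i,u_{i+1})$ lies in the region where \eqref{eq:localconcave} was proved, because the triangle inequalities and the angle conditions in \eqref{eq:Dnat} are exactly the hypotheses used there. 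These hypotheses give $x,y\in(0,\pi/2)$, $t>0$, $0\le d<1$, and $a,b$ in the stated ranges.

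\textbf{Step 2: the extra term.} The $V_C(u_0)$ term has Hessian in $(C,u_0)$ equal to $-\frac{4}{\sin x_0}\begin{pmatrix}1&1\\1&1\end{pmatrix}$ by \eqref{eq:Vderivatives}. This matrix is negative semidefinite.

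\textbf{Step 3: some triple is active.} If $\xi\neq0$, then either $\delta C\neq0$ or some $\delta u_j\neq0$. In either case the triple $\xi_j$ (or $\xi_0$) is nonzero, because every index $j$ appears in the edge $(j,j+1)$. That summand is strictly negative and the others are $\le0$. Hence $\xi^TD^2\Phi_N\xi<0$.

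\textbf{Step 4: uniqueness.} Suppose $X\neq Y$ are both critical points in $\Dnat$. The segment joining them lies in $\Dnat$ by convexity, so $g(s)=\Phi_N(X+s(Y-X))$ is defined on $[0,1]$ and satisfies $g''<0$. Then $g'$ is strictly decreasing, which contradicts $g'(0)=g'(1)=0$.

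\textbf{Main obstacle.} The real work is the local statement \eqref{eq:localconcave}, which rests on the factorizations \eqref{eq:Qfactor} and \eqref{eq:Rfactor}. I will recheck these by substituting $S=(t^2+d^2+1)/2$ and $d=ac$ and expanding. I will also confirm that the chosen leading $2\times2$ minor is the right one, using coordinate $w=\max(u,v)$, so that Sylvester's criterion applies after the permutation. A permutation of $u,v$ preserves definiteness, so this is legitimate.

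Sylvester's criterion needs a nested chain of leading minors. The chain here is $K_{11}$, then the $(C,w)$ minor, then $\det K$. $K_{11}>0$ is immediate, the middle minor is positive by \eqref{eq:Rfactor} after suppressing its positive denominator, and $\det K>0$ follows from \eqref{eq:detK}, whose denominator is positive, together with \eqref{eq:Qfactor}.
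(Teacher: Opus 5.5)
Your proposal is correct and follows the paper's proof. You establish negative definiteness of the local block $D^2L$ by Sylvester's criterion on the chain $K_{CC}$, the $(C,w)$ minor with $w=\max(u,v)$, and $\det K$, using the factorizations \eqref{eq:Qfactor} and \eqref{eq:Rfactor}. You then sum over the edges in \eqref{eq:Phiblocks}, add the negative-semidefinite $V_C(u_0)$ term, and note that every nonzero variation activates some triple. Your one-variable argument along a segment in the convex domain $\Dnat$ simply spells out the uniqueness step that the paper states as an immediate consequence.
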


\section{Reconstruction and uniqueness}\label{sec:global}

Proposition~\ref{prop:attain} supplies at least one global maximizer.  Sections~\ref{sec:structure}--\ref{sec:action} map every such maximizer to a point $X_{\max}\in\Dnat$ satisfying $\nabla\Phi_N(X_{\max})=0$.  By Theorem~\ref{thm:concavity}, all global maximizers have the same radial data.  It remains to show that these data determine the polygon.

\begin{lemma}[Reconstruction]\label{lem:injective}
Two stationary marked-Reuleaux configurations on the positive-altitude branch with the same labeled radial data $X$ differ by an orientation-preserving Euclidean motion.  If orientation is forgotten, the only additional ambiguity is reflection.
\end{lemma}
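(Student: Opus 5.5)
Given the radial data $X=(C,u_0,\ldots,u_{N-1})$, I will rebuild the configuration in momentum-centered coordinates one triangle at a time. By Lemma~\ref{lem:branch}, each triangle $(0,v_i,v_{i+1})$ has side lengths $\sqrt{u_i},\sqrt{u_{i+1}},1$ and positive orientation, since $h_i>0$. By \eqref{eq:Dnat} such a triangle is nondegenerate, so the side lengths determine its shape up to isometry, and in particular they determine the angle $\omega_i\in(0,\pi)$ at the origin. The positive-altitude branch then forces the oriented angle from $v_i$ to $v_{i+1}$ to be exactly $+\omega_i$, with no sign choice left. Hence the polar coordinates of every vertex are fixed once $v_0$ is placed: $|v_i|=\sqrt{u_i}$ and $\arg v_{i+1}=\arg v_i+\omega_i$.

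The remaining freedom is the choice of $v_0$ on the circle of radius $\sqrt{u_0}$, which is one rotation about the origin. The origin itself is canonical. By \eqref{eq:momentum} and the discussion after \eqref{eq:Pjumps}, the momentum-centered origin is the unique point at which the bulk momentum vanishes. Any Euclidean motion carrying one stationary configuration to another therefore carries centers to centers, and so the rotation about the center is the only parameter. Two configurations with the same labeled $X$ thus differ by a rotation about the center composed with a translation, which is an orientation-preserving motion.

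A consistency check is that the chain closes after $N$ steps. This is automatic, because both configurations are genuine closed cycles. It also agrees with $\sum_i\omega_i=2\pi$, which follows from the triangles sharing the common vertex $0$ with the same orientation (the remark after Lemma~\ref{lem:branch}). The pendant vertex is then fixed as well: Section~\ref{sec:reuleaux} shows it is the midpoint $w(v)$ of the marked arc centered at $v_0$, which is a function of the cycle. The full polygon is therefore recovered up to the same motion.

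If orientation is forgotten, the cycle and boundary order of Lemma~\ref{lem:order} may be read with the reversed orientation. The sign convention in \eqref{eq:starangle} and in $h_i$ is then matched by composing with a reflection. This gives the only extra ambiguity.

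The main obstacle is the first step: justifying that the labeled radial data fix the oriented angles $\omega_i$ without sign ambiguity. Here the positive-altitude branch (Lemma~\ref{lem:branch}) is essential, since without it a triangle could be attached on either side of the ray $0v_i$. A secondary point is making the canonicity of the momentum center precise, so that it can serve as a common base point for the two configurations.
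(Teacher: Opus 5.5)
Your argument is correct, but it follows a different route from the paper's proof of this lemma. It is essentially the direct reconstruction \eqref{eq:reconstruction} that the paper gives right after the lemma.

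\textbf{The paper's route.} The paper argues intrinsically. By \eqref{eq:anglesfromX}, i.e.\ the conserved-quantity relations \eqref{eq:noether-relations}--\eqref{eq:marked-noether}, the data $X$ determine every star angle $\gamma_i$. A cycle of unit edges with prescribed turning angles $\pi-\gamma_i$ is then fixed by the normalization $v_0=0$, $d_0=(1,0)$ through $d_i=R_{\pi-\gamma_i}d_{i-1}$. This route uses $C$ essentially. Once the $\gamma_i$ are known, however, it needs neither the location of the momentum center nor the signs of the altitudes beyond the orientation convention.

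\textbf{Your route.} You argue extrinsically. You read the $u_i$ as squared distances from the momentum center, and you use $h_i=d_i\cross(-v_i)=v_i\cross v_{i+1}>0$ to fix each polar increment as $+\omega_i$. You never use $C$ or the Noether identities, so you actually show that $u_0,\dots,u_{N-1}$ alone fix the centered configuration. The cost is twofold:
\begin{itemize}
\item You rely on the positive-altitude hypothesis essentially.
\item You need the center to be well defined for each configuration separately. That rests on uniqueness of the multipliers (Lemma~\ref{lem:licq}), not on motions carrying centers to centers. Your sentence about equivariance is therefore unnecessary: place each configuration in its own centered coordinates and compare.
\end{itemize}
The gain is that your argument links directly to the reconstruction from a critical point of $\Phi_N$.

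\textbf{One false remark.} Your consistency check claims $\sum_i\omega_i=2\pi$, which is wrong. The diameter cycle is a star polygon: by Lemma~\ref{lem:order}, consecutive cycle vertices are $k$ boundary positions apart. The angle matching \eqref{eq:anglematching} gives $\sum_i\gamma_i=\sum_i(\pi-\omega_i)$. Combined with \eqref{eq:turningsum}, this yields $\sum_i\omega_i=(N-1)\pi=2k\pi$, as in \eqref{eq:radialclosure}. Equal orientation of the triangles only makes the sum a positive multiple of $2\pi$. Closure is automatic when you compare two existing configurations, so this does not break your proof. The remark should still be corrected or deleted.
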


\begin{proof}
Equations \eqref{eq:anglesfromX} recover all star angles.  After fixing $v_0=0$ and $d_0=(1,0)$, the chosen orientation gives recursively
\[
 d_i=R_{\pi-\gamma_i}d_{i-1},\qquad v_{i+1}=v_i+d_i.
\]
Thus an already closed cycle is unique under the normalization.  The pendant is the unique midpoint of the marked arc centered at $v_0$.  Reversing orientation reflects the configuration.
\end{proof}

To reconstruct directly from the critical point, put $r_i=\sqrt{u_i}$.  For each cyclic pair, \eqref{eq:Dnat} gives a unique nondegenerate triangle with sides $r_i,r_{i+1},1$.  Let $\delta_i\in(0,\pi)$ be its angle at the centered origin.  Write the reconstructed cycle as $z_i$ (the notation avoids using $v_N$, which is reserved for the pendant vertex).  Normalize $O=0$, $z_0=(r_0,0)$, and set
\begin{equation}\label{eq:reconstruction}
 z_{i+1}=r_{i+1}R_{\delta_i}\frac{z_i}{r_i}.
\end{equation}
Every reconstructed cycle edge has length one.  At a critical point, \eqref{eq:gradui}--\eqref{eq:gradu0} imply
\[
 \sum_i\gamma_i=\sum_i(\alpha_i+\beta_i)
 =N\pi-\sum_i\delta_i.
\]
Equation \eqref{eq:gradC} gives $\sum_i\gamma_i=\pi$, so
\begin{equation}\label{eq:radialclosure}
 \sum_i\delta_i=(N-1)\pi=2k\pi.
\end{equation}
The radial direction returns after an integer number of turns and $r_N=r_0$, hence $z_N=z_0$: the cycle closes automatically.  The pendant is then inserted at the marked midpoint.

The domain $\Dnat$ records only the local triangle inequalities.  A point of $\Dnat$ need not reconstruct a small polygon, since non-neighbor distances are not among its defining inequalities.  Surjectivity is not needed.  The critical point used in the proof comes from an admissible global maximizer, so its reconstructed polygon is already known to satisfy the remaining distance constraints.

Combining compact existence, strict concavity, and Lemma~\ref{lem:injective} proves Theorem~\ref{thm:even}.  It also gives the promised exact characterization:
\begin{equation}\label{eq:exactvalue}
 A_{N+1}=\frac12\sum_i z_i(X_N)\cross z_{i-2}(X_N)
 +\sin\theta(X_N)\bigl(1-\cos\theta(X_N)\bigr),
\end{equation}
where $X_N$ is the unique critical point of \eqref{eq:Phi} in $\Dnat$ and the vertices are reconstructed by \eqref{eq:reconstruction}.

\section{Low orders and completion}\label{sec:completion}

Reinhardt's theorem gives \eqref{eq:oddvalue} for every odd $n\geq3$ \cite{Reinhardt1922}.  For $n=4$, the elementary diagonal-area inequality gives
\[
 A_4=\frac12;
\]
the optimizer is not unique.  Graham's theorem gives the exact largest small hexagon and $A_6=0.674981\ldots$ \cite{Graham1975}.  For $N=5$ some of the seven indices used in the pendant calculation coincide, so the uniform argument does not include the hexagon.

\begin{corollary}[All orders]\label{cor:complete}
The value $A_n$ is determined for every $n\geq3$: by \eqref{eq:oddvalue} for odd $n$, by the classical results for $n=4,6$, and by \eqref{eq:exactvalue} for even $n\geq8$.
\end{corollary}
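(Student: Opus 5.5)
The plan is to assemble the corollary case by case from results already in hand, and to check that each case really pins down the number $A_n$, not just the shape of a maximizer.

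For odd $n\geq3$, Reinhardt's theorem \cite{Reinhardt1922} gives \eqref{eq:oddvalue} directly. For $n=4$, I would recall the elementary argument. By Proposition~\ref{prop:attain} a maximizer may be taken convex. Its area equals $\tfrac12|e_1||e_2|\sin\phi\leq\tfrac12$, where $e_1,e_2$ are the diagonals and $\phi$ the angle between them. Equality is attained by any quadrilateral with perpendicular unit diagonals, for example the square of diagonal one, so $A_4=\tfrac12$. For $n=6$, I would cite Graham \cite{Graham1975}.

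The substantive case is even $n\geq8$, where I set $N=n-1\geq7$. The chain is as follows.
\begin{enumerate}
\item Proposition~\ref{prop:attain} gives a maximizer, which is strictly convex and of diameter one.
\item Theorem~\ref{thm:FS} and Lemma~\ref{lem:order} fix its diameter graph and boundary order.
\item Lemmas~\ref{lem:envelope} and \ref{lem:licq} justify the KKT conditions \eqref{eq:kkt}.
\item Section~\ref{sec:noether} and Lemma~\ref{lem:branch} place the momentum-centered radial data $X_{\max}$ in $\Dnat$.
\item By \eqref{eq:bridge}, $X_{\max}$ is a critical point of $\Phi_N$.
\item Theorem~\ref{thm:concavity} says $\Phi_N$ has at most one critical point on the convex set $\Dnat$. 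Hence $X_{\max}=X_N$ is that critical point, independent of which maximizer was chosen.
\item Lemma~\ref{lem:injective} and the reconstruction \eqref{eq:reconstruction} recover the cycle $z_i(X_N)$ up to a rigid motion, and the pendant sits at the marked midpoint.
\end{enumerate}
The area then follows from \eqref{eq:pendantbonus} and \eqref{eq:skeletonarea}, which is exactly \eqref{eq:exactvalue}. The right-hand side is a function of $X_N$ alone, so $A_n$ is determined.

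The step I expect to need the most care is the logical one: making sure "determined" is justified even though $X_N$ is given only implicitly as the critical point. Existence of $X_N$ is supplied by the maximizer, not by any surjectivity onto $\Dnat$. The realized configuration is admissible because it is the maximizer itself, as the remark after \eqref{eq:radialclosure} notes. So \eqref{eq:exactvalue} characterizes $A_n$ uniquely even without a closed form.

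I would also check that the reconstruction orientation agrees with the orientation convention used in \eqref{eq:skeletonarea}. The positive-altitude branch guarantees this, so the formula gives $+A_n$ rather than its negative.
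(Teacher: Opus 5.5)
Your proposal is correct and follows the paper's own assembly: Reinhardt for odd $n$, the diagonal-area inequality and Graham for $n=4,6$, and for even $n\geq8$ the chain compactness, Foster--Szab\'o reduction and boundary order, KKT with the conserved quantities, membership in $\Dnat$ via the positive-altitude lemma, strict concavity of $\Phi_N$, and reconstruction, with the critical point's existence supplied by the maximizer rather than by surjectivity. One small slip: not every convex quadrilateral with perpendicular unit diagonals is small, since if the diagonals cross at distances $a,b$ from two adjacent vertices the side joining those vertices has length $\sqrt{a^2+b^2}$, which exceeds one for $a=b=0.9$; your square of diagonal one is small, so it still gives $A_4=\tfrac12$.
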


\section{Numerical examples and checks}\label{sec:numerics}

Solving the critical equations numerically and reconstructing the cycle gives the values in Table~\ref{tab:values}.  They are not used in any implication above.

\begin{table}[h]
\caption{Numerical evaluations of the analytically characterized maxima}
\label{tab:values}
\centering
\begin{tabular}{cc}
\toprule
$n$ & $A_n$ \\
\midrule
8  & 0.7268684827516267 \\
10 & 0.7491373458778303 \\
12 & 0.7607298734487959 \\
14 & 0.7675310111207523 \\
16 & 0.7718613219805711 \\
18 & 0.7747881650744942 \\
\bottomrule
\end{tabular}
\end{table}

\begin{figure}[t]
\centering
\begin{minipage}{.47\textwidth}
  \centering
  \includegraphics[width=\linewidth]{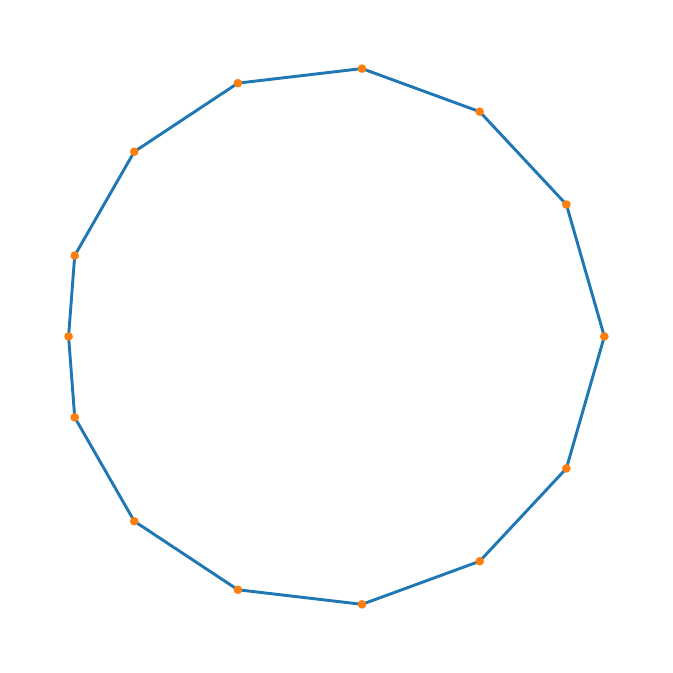}
\end{minipage}\hfill
\begin{minipage}{.47\textwidth}
  \centering
  \includegraphics[width=\linewidth]{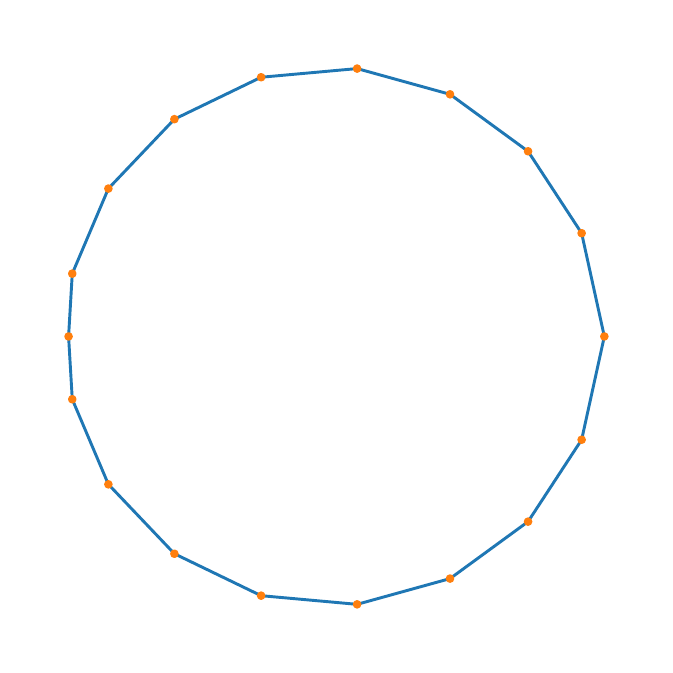}
\end{minipage}
\caption{Numerically reconstructed maximizers for $n=14$ (left) and $n=18$ (right).  The drawings are obtained from the critical equations and are not used in the proof.}
\label{fig:examples}
\end{figure}

At each listed solution, reconstruction closes to numerical precision, all pairwise distances are at most one, and the reflection-related variables agree.  The $n=14$ value also agrees with the independently certified value in the separate $n=14$ manuscript.  The accompanying reproducibility archive contains scripts that check \eqref{eq:forces}, the momentum and moment jumps, the action derivatives, and the factorizations \eqref{eq:Qfactor}--\eqref{eq:Rfactor}, and includes an independent nonlinear-real-arithmetic verifier.  These computations serve as consistency checks.  The proof is contained in the displayed arguments above.

\appendix

\section{Differential identities used in the proof}\label{app:calculations}

We collect here the differential identities used in Sections~\ref{sec:noether}--\ref{sec:concavity}.

\subsection{Skeleton area, gradient, and pendant gauge}

By \eqref{eq:xmap}, consecutive skeleton vertices in boundary order have cycle labels $i$ and $i-2$.  The shoelace formula therefore gives
\[
 A_S=\frac12\sum_i v_i\cross v_{i-2}.
\]
Only the terms with first label $i$ and second label $i$ contribute to the gradient at $v_i$, whence
\[
 \nabla_{v_i}A_S
 =\frac12J(v_{i+2}-v_{i-2})
 =\frac12J(d_{i-2}+d_{i-1}+d_i+d_{i+1}).
\]

Put $q=d_{-1}$, $r=d_0$, and let $c=2\theta$ be the geometric angle defined near the equality manifold by
\[
 c(q,r)=\arccos\left(-\frac{q\cdot r}{|q||r|}\right).
\]
At $|q|=|r|=1$, writing $s=q\cdot r$ gives
\[
 \dd c=\frac{\dd s-s(q\cdot\dd q+r\cdot\dd r)}{\sin c}.
\]
The gauge extension $\widetilde c=\arccos(-q\cdot r)$ instead has
\[
 \dd\widetilde c=\frac{\dd s}{\sin c}.
\]
Their difference is a linear combination of
$\dd g_{-1}=q\cdot\dd q$ and $\dd g_0=r\cdot\dd r$; changing from one extension to the other therefore only shifts $\lambda_{-1},\lambda_0$.  Since
\[
 \dd(q\cdot r)
 =-r\cdot\dd v_{-1}+(r-q)\cdot\dd v_0+q\cdot\dd v_1,
\]
we obtain
\[
 \dd p(\widetilde c/2)=\frac{p'(\theta)}{2\sin c}\dd(q\cdot r)
 =\eta\dd(q\cdot r),
\]
which gives \eqref{eq:forces}.  Translation and rotation invariance give zero total force and torque; directly,
\[
 f_{-1}+f_0+f_1=0,\qquad
 v_{-1}\cross f_{-1}+v_0\cross f_0+v_1\cross f_1=0.
\]

\subsection{Telescoping identities and the marked moment}

From \eqref{eq:momentum},
\begin{align*}
P_i-P_{i-1}
 &=\frac12J(v_{i+2}-v_{i-2})
   +\lambda_{i-1}d_{i-1}-\lambda_i d_i\\
 &=-f_i
\end{align*}
by \eqref{eq:kkt}.  Expanding \eqref{eq:angularmomentum}, using
$v_i=v_{i-1}+d_{i-1}$ and $|d_{i-1}|=|d_i|=1$, gives the exact identity
\[
 M_i-M_{i-1}=v_i\cross(P_i-P_{i-1}).
\]
In centered coordinates take the bulk values $P=0$ and $M=C$.  At the first jump,
\[
 M_{-1}=C+\eta v_{-1}\cross d_0,\qquad P_{-1}=\eta d_0,
\]
so $M_{-1}-v_{-1}\cross P_{-1}=C$.  At the marked index,
\begin{align*}
M_0-v_0\cross P_0
 &=C+\eta(v_{-1}-v_0)\cross d_0\\
 &=C-\eta(d_{-1}\cross d_0)
 =C-\frac12p'(\theta).
\end{align*}
Since
\[
 M_i-v_i\cross P_i=-u_i+\frac12\cos\gamma_i,
\]
this yields
\[
 u_0+C=\frac12\cos(2\theta)+\frac12p'(\theta)
 =\frac12\cos\theta.
\]
The last jump restores the bulk moment by the zero-torque identity above.

\subsection{Triangle Hessian and the two positive numerators}

For the triangle in \eqref{eq:Delta}, differentiation of the cosine laws
\[
 \cos\alpha=\frac{u+1-v}{2\sqrt u},\qquad
 \cos\beta=\frac{v+1-u}{2\sqrt v}
\]
and of Heron's expression gives \eqref{eq:Ederivatives}.  Differentiating
$x\cos x-\sin x$ subject to $\cos x=2(u+C)$ gives \eqref{eq:Vderivatives}.  Substitution produces the matrix \eqref{eq:K}.  Its $(C,u)$ principal minor and determinant simplify to
\[
 \det K_{C,u}
 =\frac{8\Delta u+(s_x+s_y)(1-u-v)}
 {2\Delta s_xs_yu},
\]
\[
 \det K
 =\frac{u+v-(u-v)^2-2\Delta(s_x+s_y)}
 {\Delta s_xs_yuv}.
\]
After placing $w=\max(u,v)$ in the second coordinate, their numerators are respectively $R$ and $Q$ from \eqref{eq:Rraw} and \eqref{eq:detK}.

The factorization of $Q$ follows directly from \eqref{eq:heronhalf}:
\begin{align*}
2Q&=t^2+1-d^2-2tab\\
  &=(t-ab)^2+1-a^2b^2-a^2c^2
   =(t-ab)^2+1-a^2.
\end{align*}
For $R$, substitute $2S=t^2+d^2+1$ into both sides of
\[
 t(S+d)+2ab(1-S)
 =ab(1-d^2)+t^2(1+d-ab)+\frac t2(t-1-d)^2
\]
and expand.  The two polynomials agree term by term.  The sign argument in Section~\ref{sec:concavity} then completes Sylvester's criterion without a numerical bound.

\section*{Statements and Declarations}

\paragraph{Competing interests.}
The author declares that he has no competing financial or non-financial interests that are directly or indirectly related to this work.

\paragraph{Funding.}
No specific funding was received for this work.

\paragraph{Data and code availability.}
No empirical dataset is used.  The submission includes the symbolic check scripts and numerical regression code referred to in Section~\ref{sec:numerics}.  None of these programs is a logical premise of the analytic proof.

\paragraph{Author contributions.}
Dawid Trela is the sole author of the work.

\paragraph{Ethics approval and consent.}
Not applicable.

\paragraph{AI-assisted tools.}
During preparation of the work, the author used OpenAI ChatGPT and Codex to assist with exploratory algebraic checks, code preparation, and manuscript editing.  The author independently checked the mathematical arguments, references, computations, and final text and takes full responsibility for the article.

\bibliographystyle{spmpsci}
\bibliography{references}

\end{document}